\DeclareMathOperator{\Aut}{Aut}
\DeclareMathOperator{\ev}{ev}
\DeclareMathOperator{\Cont}{Cont}
\newcommand{\M}{M}
\newcommand{\C}{C}
\newcommand{\Q}{Q}
\newcommand{\PP}{\mathbb P}
\newcommand{\Mbar}{\overline M}
\newcommand{\Cbar}{\overline C}
\newcommand{\Qbar}{\overline Q}
\newtheorem{lem}{Lemma}
\newtheorem{prop}{Proposition}
\theoremstyle{remark}
\newtheorem{rem}{Remark}
\theoremstyle{definition}
\newtheorem*{defi}{Definition}
\begin{document}

\title{Tautological relations in moduli spaces of weighted pointed curves}
\author{Felix Janda}

\begin{abstract}
  Pandharipande-Pixton have used the geometry of the moduli space of
  stable quotients to produce relations between tautological Chow
  classes on the moduli space $\M_g$ of smooth genus $g$ curves. We
  study a natural extension of their methods to the boundary and more
  generally to Hassett's moduli spaces $\Mbar_{g, \mathbf w}$ of
  stable nodal curves with weighted marked points.

  Algebraic manipulation of these relations brings them into a
  Faber-Zagier type form. We show that they give Pixton's generalized
  FZ relations when all weights are one. As a special case, we give a
  formulation of FZ relations for the $n$-fold product of the
  universal curve over $\M_g$.
\end{abstract}

\maketitle

\section{Introduction}

\subsection{Moduli spaces of curves with weighted markings}

\subsubsection{Definition}

As a GIT variation of the Deligne-Mumford moduli space of stable
marked curves, for any $n$-tuple $\mathbf w = (w_1, \cdots, w_n)$ with
$w_i \in \mathbb Q \cap ]0, 1]$ Hassett \cite{Ha03} has defined a
moduli space $\Mbar_{g, \mathbf w}$, parametrizing nodal semi-stable
curves $C$ of arithmetic genus $g$ with $n$ numbered marked points
$(p_1, \dotsc, p_n)$ in the smooth locus of $C$ satisfying two
stability conditions:
\begin{enumerate}
\item\label{cond:together}The points in a subset $S \subseteq \{1, \dotsc, n\}$ are
  allowed to come together if and only if $\sum_{i \in S} w_i \le 1$.
\item\label{cond:stable} $\omega_C\left(\sum_{i = 1}^n w_ip_i\right)$ is ample.
\end{enumerate}
The second condition implies that the total weight plus the number of
nodes of every genus $0$ component of $C$ must be strictly greater
than 2.

The main cases we have in mind are the usual moduli space $\Mbar_{g,
  n}$ of marked curves, which occurs when all the weights are equal to
1, the case when $\sum_{i = 1}^n w_i \le 1$, which is a
desingularization of the $n$-fold product of the universal curve over
$\Mbar_g$, and the case when $g = 0$, $w_1 = 1$, $w_2 = 1$ and
$\sum_{i = 3}^n w_i \le 1$, which gives the Losev-Manin spaces
\cite{LoMa00}. Moduli spaces mixed pointed curves also naturally
appear when studying moduli spaces of stable quotients
\cite{MOP11}.

We will use various abbreviations for the weight data, like $(\mathbf
w, 1^m)$ for the data with first entries given by $\mathbf w$ and
further $m$ entries of $1$.

\subsubsection{Tautological classes}

Using the universal curve $\pi: \Cbar_{g, \mathbf w} \to \Mbar_{g,
  \mathbf w}$, the $n$ sections $s_i: \Mbar_{g, \mathbf w} \to
\Cbar_{g, \mathbf w}$ corresponding to the markings and the relative
dualizing sheaf $\omega_\pi$ we can define $\psi$- and
$\kappa$-classes:
\begin{align*}
  \psi_i = c_1(s_i^* \omega_\pi) \\
  \kappa_i = \pi_*(c_1(\omega_\pi)^{i + 1})
\end{align*}
Notice that in the case of $\Mbar_{g, n}$ the definition of
$\kappa$-classes is different from the usual definition as in
\cite{ArCo96}.\footnote{The $\kappa$-classes we use here appear
  naturally when pushing forward powers of $\psi$-classes along maps
  forgetting points of small weight whereas the usual $\kappa$-classes
  are convenient when studying push-forwards of powers of
  $\psi$-classes along maps forgetting points of weight equal to
  $1$. The fact that we will mainly consider the first kind of
  push-forwards explains our choice of $\kappa$-classes.}

Each subset $S \subseteq \{1, \dotsc, n\}$ defines a diagonal class
$D_S$ as the class of the locus where all the points of $S$
coincide. By Condition \eqref{cond:together} the class $D_S$ is zero
if and only if $\sum_{i \in S} w_i > 1$.

As $\Mbar_{g, n}$ the moduli space $\Mbar_{g, \mathbf w}$ is
stratified according to the topological type of the curve and each
stratum, indexed by a dual graph $\Gamma$ (see
\cite[Appendix~A]{GrPa03} for a description of dual graphs of strata
of $\Mbar_{g, n}$), is the image of a clutching map
\begin{equation*}
  \xi_\Gamma: \prod_i \Mbar_{g_i, \mathbf w_i} \to \Mbar_{g, \mathbf w}.
\end{equation*}
Here points which are glued together have weight 1. The map
$\xi_\Gamma$ is finite of degree $|\Aut(\Gamma)|$ \footnote{To
  determine the number $|\Aut(\Gamma)|$ of automorphisms of $\Gamma$
  the graph $\Gamma$ should be regarded as a collection of distinct
  half-edges of which some are glued together. For example when $n=0$
  and $\Gamma$ consists of exactly one vertex and one edge, there is
  exactly one non-trivial automorphism, which interchanges the two
  half-edges; accordingly the map $\xi_\Gamma: \Mbar_{g - 1, 2} \to
  \Mbar_g$ is a double cover.}.

\subsection{Formulation of the relations}

To state the Faber-Zagier-type relations on $\Mbar_{g, \mathbf w}$ we
need to introduce several formal power series.

The hypergeometric series $A$ and $B$ already appeared in the original
FZ relations. They are defined by
\begin{equation*}
  A(t) = \sum_{i = 0}^\infty \frac{(6i)!}{(3i)!(2i)!} \left(\frac t{72}\right)^i = 1 + O(t^1), \quad B(t) = \sum_{i = 0}^\infty \frac{(6i)!}{(3i)!(2i)!} \frac{6i + 1}{6i - 1} \left(\frac t{72}\right)^i.
\end{equation*}
We will actually not directly use $B$ in the definition of the
relations but $A$ and a family $C_i$ of series strongly related to $A$
and $B$ which were already used in \cite{PaPi13P} for the proof of the
equivalence between stable quotient and FZ relations on $\M_g$. They
are defined recursively by
\begin{equation*}
  C_1 = C = \frac BA, \quad C_{i + 1} = \left(12 t^2 \frac{\mathrm d}{\mathrm dt} - 4it\right)C_i.
\end{equation*}
Notice that $C_i$ is a multiple of $t^{i - 1}$.

As in \cite{PaPi13P} these series appear in the study of the two
variable functions
\begin{align*}
  \Phi(t, x) =& \sum_{d = 0}^\infty \prod_{i = 1}^d \frac 1{1 - it} \frac{(-1)^d}{d!} \frac{x^d}{t^d} \\
  \gamma =& \sum_{i \ge 1} \frac{B_{2i}}{2i \cdot (2i - 1)} t^{2i - 1} + \log(\Phi),
\end{align*}
where the Bernoulli numbers $B_k$ are defined by
\begin{equation*}
  \frac t{e^t - 1} = \sum_{k = 0}^\infty B_k \frac{t^k}{k!}.
\end{equation*}
These two-variable functions appear both in the localization formula
for stable quotients (see Section~\ref{sec:loc} and \cite{PaPi13P}) and
the $S$-matrix in the equivariant genus $0$ Gromov-Witten theory of
$\mathbb P^1$ (see Section~\ref{sec:locsum}).

Various linear bracket operators are used to insert Chow classes into
the power series in $t$. We define $\{\}_\kappa$, $\{\}_\psi$ and
$\{\}_{D_S}$ for $S \subseteq \{1, \dotsc, n\}$ by
\begin{align*}
  \{t^k\}_\kappa =& \kappa_k t^k \\
  \{t^k\}_\psi =& \psi^k t^k \\
  \{t^k\}_{D_S} =&
  \begin{cases}
    \psi_i^k t^k, & \text{if } S = \{i\}, \\
    (-1)^{|S| - 1} D_S \psi_i^{k - |S| + 1} t^k, & \text{else, where }i \in S,
  \end{cases}
\end{align*}
and linearity. It will moreover be useful to define brackets modified
by a sign $\zeta \in \{\pm 1\}$, denoted by $\{\}_{\kappa}^{\zeta}$,
$\{\}_{\psi}^{\zeta}$ and $\{\}_{D_S}^{\zeta}$ respectively, by
composing the usual bracket operator with the ring map induced by
$t \mapsto \zeta t$. For a power series $F$ in $t$ we will use the
notation $[F]_{t^i}$ for the $t^i$ coefficient of $F$.

\begin{prop}
  \label{prop:fz}
  For any codimension $r$ and the choice of a subset $S \subseteq \{1,
  \dotsc, n\}$ such that $3r \ge g + 1 + |S|$ the class
  \begin{align*}
    \Bigg[\sum_{\substack{\Gamma \\ \zeta: \Gamma \to \{\pm 1\}}}
    \hspace{-2mm}\frac 1{|\Aut(\Gamma)|} \xi_{\Gamma*}\Big(\prod_{v\text{
      vertex}}\hspace{-2mm}\exp(-\{\log(A)\}_{\kappa^{(v)}}^{\zeta(v)}) \sum_{P
    \vdash S_v}\prod_{i \in P} \{C_{|i|}\}_{D_i}^{\zeta(v)}
  \hspace{-1mm}\prod_{e\text{ edge}} \Delta_e\Big)\Bigg]_{t^{r - |E|}}
  \end{align*}
  in $A^r(\Mbar_{g, \mathbf w})$ is zero, where the sum is taken over
  all dual graphs $\Gamma$ of $\Mbar_{g, \mathbf w}$ with vertices
  colored by $\zeta$ with $+1$ or $-1$, the class $\kappa_i^{(v)}$ is
  the $i$-th $\kappa$-class in the factor corresponding to $v$ and
  $S_v$ is $S$ restricted to the markings at $v$. The edge term
  $\Delta_e$ depends only on the $\psi$-classes $\psi_1$, $\psi_2$ and
  colors $\zeta_i = \zeta(v_i) \in \{\pm 1\}$ at the vertices $v_1$,
  $v_2$ joined by $e$ and is defined by
  \begin{equation*}
    2t(\psi_1 + \psi_2) \Delta_e = (\zeta_1 + \zeta_2) \{A^{-1}\}_{\psi_1}^{\zeta_1}\{A^{-1}\}_{\psi_2}^{\zeta_2} + \zeta_1\{C\}_{\psi_1}^{\zeta_1} + \zeta_2\{C\}_{\psi_2}^{\zeta_2}.
  \end{equation*}
\end{prop}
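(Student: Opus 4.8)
The plan is to obtain these relations by localization on a moduli space of stable quotients, following the Pandharipande–Pixton strategy and its refinement in \cite{berlin}, but now carried out over Hassett's spaces $\Mbar_{g, \mathbf w}$. Concretely, I would consider the moduli space of stable quotients with weighted markings mapping to $\Mbar_{g, \mathbf w}$, equip the relevant universal sheaf with a $\mathbb{C}^*$-action, and compute the virtual localization contributions. The vanishing of the proposition's expression should come from the fact that a suitable class on the stable quotient space pushes forward to zero in $A^r(\Mbar_{g, \mathbf w})$ for dimension reasons precisely when the bound $3r \ge g + 1 + |S|$ holds; the inequality encodes that the relevant tautological class lies above the top nonzero degree of the localization-produced obstruction bundle, so that the equivariant pushforward has no nonequivariant part.

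The key computational steps are the following. First I would set up the torus fixed loci, which are indexed by the combinatorial data of how degree and markings are distributed; the connected components correspond exactly to the dual graphs $\Gamma$ together with a coloring $\zeta$ recording, at each vertex, which of the two fixed points of $\mathbb{P}^1$ the corresponding piece maps to (this is the source of the $\{\pm 1\}$-coloring). Second, I would evaluate the localization contribution of each fixed locus. The vertex contributions factor through the two-variable functions $\Phi$ and $\gamma$ introduced above: expanding $\log(\Phi)$ and collecting the dependence on $\kappa$ classes produces the factor $\exp(-\{\log(A)\}_{\kappa^{(v)}}^{\zeta(v)})$, while the marking insertions, organized by set partitions $P \vdash S_v$, produce the products $\prod_{i \in P}\{C_{|i|}\}_{D_i}^{\zeta(v)}$ — here the diagonal brackets $\{\}_{D_S}$ arise because several low-weight markings may collide, and the series $C_i$ are exactly the combination of $A$ and $B$ governing the marking insertions as in \cite{berlin}. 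Third, the edge contributions come from smoothing the nodes: the normalization sequence relates the gluing to the $\psi$ classes $\psi_1, \psi_2$ at the two branches and the colors $\zeta_1, \zeta_2$, yielding the stated formula for $\Delta_e$ after dividing by the node-smoothing factor $2t(\psi_1 + \psi_2)$.

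Assembling these, the full localization expression is a sum over $(\Gamma, \zeta)$ of $\frac{1}{|\Aut(\Gamma)|}$ times the pushforward along $\xi_\Gamma$ of the product of vertex and edge terms, weighted by the appropriate powers of the equivariant parameter $t$; extracting the nonequivariant (degree-zero in $t$) part after the overall shift by $t^{r - |E|}$ gives precisely the bracketed expression in the statement. The vanishing then follows because, under $3r \ge g + 1 + |S|$, the corresponding stable-quotient class is a pushforward of a class whose support has too large codimension, so its contribution to $A^r$ vanishes identically.

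The main obstacle I expect is the bookkeeping of the marking insertions in the weighted setting. Unlike the case of $\Mbar_{g,n}$, several markings of small weight may coincide, so the localization contribution at a vertex is not simply a product of single-marking factors but must be reorganized as a sum over set partitions, with collisions recorded by the diagonal classes $D_i$ and the precise combinatorial coefficients $(-1)^{|S|-1}$ built into $\{\}_{D_S}$. Verifying that the raw localization output, after expanding $\Phi$ and carefully tracking the $\psi_i^{k-|S|+1}$ powers forced by the collision geometry, matches exactly the partition sum $\sum_{P \vdash S_v}\prod_{i \in P}\{C_{|i|}\}_{D_i}^{\zeta(v)}$ is the step requiring the most care; the recursion $C_{i+1} = (12t^2\,\mathrm{d}/\mathrm{d}t - 4it)C_i$ should be what makes this identification go through, since it encodes how differentiating the marking generating function $\Phi$ in $x$ (the variable tracking marked points) interacts with the degree-counting variable $t$.
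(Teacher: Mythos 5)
Your overall strategy --- $\mathbb C^*$-localization on stable quotient spaces over Hassett spaces --- is indeed the paper's, but your account of the two decisive mechanisms is incorrect, and this leaves genuine gaps. First, the source of the relations is not that ``the corresponding stable-quotient class is a pushforward of a class whose support has too large codimension'': it is that the equivariant pushforward $\nu_*^{vir}\varphi(\mathbf a)$ is an honest equivariant class, i.e.\ a polynomial in the equivariant parameter $s$, while the localization formula expresses it as a sum of rational functions of $s$; hence every coefficient of a negative power of $s$ must vanish, and these coefficients are the stable quotient relations (Propositions~\ref{prop:open:sqrel} and~\ref{prop:gen:sqrel}). The hypothesis $3r \ge g+1+|S|$ plays no role at that stage. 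Second, the series $A$ and $C_i$ do not appear by ``expanding $\log(\Phi)$ and collecting the dependence on $\kappa$ classes'': the raw localization output is expressed in the two-variable series $\gamma$ and $\delta$, and passing to the FZ form requires the chain of algebraic steps of Sections~\ref{sec:open:diffalg}--\ref{sec:open:ext} --- reduction to $a_i\le 1$ via the differential equation $D\delta = -\delta^2 + x + \frac 14$, Ionel's substitution $u = t/\sqrt{1+4x}$, $y = -x/(1+4x)$, and the extraction of extremal coefficients via the lemma on $[(1+4y)^d F]_{y^d}$. Only there do $\log A$ and $2^i C_i$ arise, as the extremal coefficients of $\gamma$ and $D^{i-1}\delta$, and only there does the bound $3r \ge g+1+|S|$ enter, as the constraint under which the extremal extraction produces a (possibly nontrivial) relation --- not as a codimension bound on a support.

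There are two further gaps. Your claim that the fixed loci ``correspond exactly to the dual graphs $\Gamma$ together with a coloring $\zeta$'' is false: a single colored dual graph receives contributions from infinitely many fixed loci, because each node and each weight-one marking can carry a chain of rational components with arbitrary degree splittings (Figure~\ref{fig:chain}). These chains must be resummed, which is exactly what the Givental-style computation of the $S$-matrix and of the series $P^{ij}$, $E^{ij}$ in Section~\ref{sec:locsum} accomplishes; without it one cannot even write down the edge term, let alone derive the stated formula for $\Delta_e$. Moreover, the vertex contribution contains, via Mumford's formula for the Hodge bundle on nodal curves, classes supported on boundary divisors (the term $V^\zeta$ of Proposition~\ref{prop:gen:sqrel}); converting these into pure edge factors requires the excess intersection computation of Lemma~\ref{lem:lambda} and Proposition~\ref{prop:gen:excess}, which is absent from your outline. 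By contrast, the partition sum you single out as the hardest bookkeeping is comparatively easy: $\sum_{P\vdash S_v}\prod_{i\in P}\{C_{|i|}\}_{D_i}$ is just the truncation of $\exp\bigl(\sum_{i\ge 1} \frac 1{i!}\{p^i C_i\}_\Delta\bigr)$ modulo higher powers of the $p_i$, once the $C_i$ have been produced by the extremal-coefficient step.
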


To see that the series $\Delta_e$ is well-defined one can use the
identity $A(t)B(-t) + A(-t)B(t) + 2 = 0$ \cite{Pi12P}. The proof of
Proposition~\ref{prop:fz} gives an alternative more geometric proof.

In the case of $\Mbar_{g, n}$ the relations of
Proposition~\ref{prop:fz} are a reformulation of the part of Pixton's
generalized FZ relations \cite{Pi12P} with empty partition $\sigma$ and
coefficients $a_i$ only valued in $\{0, 1\}$. The set $S$ corresponds
to the set of all $i$ such that $a_i = 1$.

To obtain a set of relations analogous to Pixton's relations we need
to take the closure of the relations of Proposition~\ref{prop:fz}
under multiplication with $\psi$- and $\kappa$-classes and
push-forward under maps forgetting marked points of weight 1. See for
this also the discussion in Section \ref{sec:final} and
\cite[Section~3.5]{PPZ15}.

In total this gives a proof of \cite[Conjecture~1]{Pi12P} in Chow. We
therefore have verified Pixton's remark \cite{Pi12P} that it should be
possible to adapt the stable quotients method to prove that his
generalized relations hold. In cohomology \cite[Conjecture~1]{Pi12P} has
already been established by a completely different method in
\cite{PPZ15}.

\subsection{Plan of Paper}

Section~\ref{sec:sq} introduces stable quotient moduli spaces of
$\mathbb P^1$ with weighted marked points, slightly generalizing the
usual moduli spaces of stable quotients. We define them, sketch their
existence and review structures on them. In Section~\ref{sec:loc} we
review the virtual localization formulas both for stable quotients and
stable maps to $\mathbb P^1$. Section~\ref{sec:open} contains a proof
of Proposition~\ref{prop:fz} restricted to powers of the universal
curve over $\M_g$. The proof in this case is much simpler than the
general case but many parts of it can be referred to later
on. Section~\ref{sec:locsum} provides calculations of localization
sums using Givental's method necessary in the proof of the general
relations. Finally Section~\ref{sec:general} contains the proof of
Proposition~\ref{prop:fz}.

\subsection*{Acknowledgments}

The author would like to thank Rahul Pandharipande for the idea of
considering the universal curve on the moduli space of stable quotient
and recognizing the edge sums appearing in the localization
calculation and Aaron Pixton for explaining many aspects of the
preprint \cite{PaPi11P} and FZ relations in general. The author is
grateful for discussions with Alina Marian, Dragos Oprea and Dimitri
Zvonkine on the moduli of stable quotients.

The author was supported by the Swiss National Science Foundation
grant SNF 200021\_143274.

\section{Moduli spaces of stable quotients}
\label{sec:sq}

\subsection{Introduction}

The proof of Proposition~\ref{prop:fz} is based on the geometry of
stable quotients. These moduli spaces give an alternative
compactification of the space of maps from curves to Grassmannians and
were first introduced in \cite{MOP11}. We will need a combination
of these spaces with Hassett's spaces of weighted stable curves. These
spaces are different from the $\epsilon$-stable quotient spaces
introduced by Toda \cite{To11}, where the stability conditions on
quotient sheaf instead of the points varies. Similar spaces $\Mbar_{g,
  \mathbf w}(\mathbb P^m, d)$ in Gromov-Witten theory have been
studied in \cite{AlGu08}, \cite{BaMa09} and \cite{MuMu08}.

The moduli space $\Qbar_{g, \mathbf w}(\mathbb P^1, d)$ parametrises
nodal curves $C$ of arithmetic genus $g$ with $n$ markings $p_i$
weighted by $\mathbf w$ together with a quotient sequence
\begin{equation*}
  0 \to S \to \mathcal O_C \otimes \mathbb C^2 \to Q \to 0
\end{equation*}
satisfying several conditions:
\begin{itemize}
\item The underlying curve with weighted marked points satisfies all
  the properties of being stable but possibly the ampleness
  condition~\eqref{cond:stable}.
\item $S$ is locally free of rank 1.
\item $Q$ has degree $d$.
\item The torsion of $Q$ is outside the nodes and the markings of weight 1.
\item $\omega_C\left(\sum_{i = 1}^n w_ip_i\right) \otimes S^{\otimes
    (-\varepsilon)}$ is ample for any $0 < \varepsilon \in \mathbb Q$.
\end{itemize}
Isomorphisms of stable quotients are defined by isomorphisms of
weighted stable curves such that the kernels of the quotient maps are
related via pull-back by the isomorphism. In
Section~\ref{sec:sq:const} we give a precise definition for families
and sketch a proof that $\Qbar_{g, \mathbf w}(\mathbb P^1, d)$ is a
proper Deligne-Mumford stack.

\subsection{Structures}

There is a universal curve $\Cbar_{g, \mathbf w}(\mathbb P^1, d)$ over
$\Qbar_{g, \mathbf w}(\mathbb P^1, d)$ with a universal quotient sequence
\begin{equation*}
  0 \to \mathcal S \to \mathcal O_{\Cbar_{g, \mathbf w}(\mathbb P^1, d)} \otimes \mathbb C^2 \to \mathcal Q \to 0.
\end{equation*}
Moreover as in Gromov-Witten theory for each marking $i$ of weight $1$
there is an evaluation map $\ev_i: \Qbar_{g, \mathbf w}(\mathbb P^1,
d) \to \mathbb P^1$, which is defined by noticing that
\begin{equation*}
  0 \to \mathcal S \to \mathcal O_C \otimes \mathbb C^2 \to \mathcal Q \to 0
\end{equation*}
restricted to a marking $p$ of weight $1$ gives a point in $\mathbb
P^1$ since tensoring the above sequence with the residue field $k_p$
at $p$ gives a $k_p$ valued point in the Grassmannian $\mathbb G(1, 2)
= \mathbb P^1$.

There is also a forgetful map $\nu: \Qbar_{g, \mathbf w}(\mathbb P^1,
d) \to \Mbar_{g, \mathbf w}$ forgetting the data of the quotient
sequence and stabilizing unstable components.

Finally, there is a comparison map $c: \Mbar_{g, \mathbf w}(\mathbb
P^1, d) \to \Qbar_{g, \mathbf w}(\mathbb P^1, d)$ from the moduli
space of stable maps of degree $d$ to $\mathbb P^1$. It contracts all
components that would become unstable \footnote{The only components
  which are stable in the Gromov-Witten but not in the stable
  quotients theory are non-contracted components of genus 0 with
  exactly one node and no marking.} and introduces torsion at the
point the component is contracted to according to the degree of the
map restricted to that component.

The open substack $\Q_{g, \mathbf w}(\mathbb P^1, d) \subset \Qbar_{g,
  \mathbf w}(\mathbb P^1, d)$ is defined to be the preimage of the
moduli space of smooth curves $\M_{g, \mathbf w}$ under the forgetful
map $\nu$.

\subsection{Construction}
\label{sec:sq:const}

We want to reduce the existence of moduli spaces of stable quotients
with weighted marked points to that of the usual moduli spaces of
stable quotients. For this we use ideas from \cite{AlGu08} and
\cite{BaMa09}. Because it is not the main topic of this article we
will try to be as brief as possible.

We will allow in this section the weight data also to include $0$. So
with $\mathbf w$ we will denote an $n$-tuple $(w_1, \dotsc, w_n)$ with
$0 \le w_i \le 1$ for all $i \in \{1, \dotsc, n\}$ here. We also
consider more generally stable quotients to $\mathbb P^m$.

\begin{defi}
  An object $(C, s_1, \dotsc, s_n, \mathcal O_C^{m + 1} \to \mathcal
  Q)$ in the stack $\Qbar_{g, \mathbf w}(\mathbb P^m, d)$ over a
  scheme $S$ is a proper, flat morphism $\pi: C \to S$ together with
  $n$ sections $s_i$ and a quotient sequence of quasi-coherent sheaves
  on $C$ flat over $S$
  \begin{equation*}
    0 \to \mathcal S \to \mathcal O_C^{m + 1} \to \mathcal Q \to 0
  \end{equation*}
  such that
  \begin{enumerate}
  \item The fibers of $\pi$ over geometric points are nodal connected
    curves of arithmetic genus $g$.
  \item For any $S \subset \{1, \dotsc, n\}$ such that the
    intersection of $s_i$ for all $i \in S$ is nonempty we must have
    $\sum_{i \in S} w_i \le 1$ and if in addition the intersection
    touches the singular locus of $\pi$ we must have $\sum_{i \in S}
    w_i = 0$.
  \item $\mathcal S$ is locally free of rank $1$.
  \item $\mathcal Q$ is locally free outside the singular locus of
    $\pi$ and of degree $d$.
  \item $\omega_\pi(\sum_{i = 1}^n w_is_i) \otimes \mathcal
    S^{\otimes(-\varepsilon)}$ is $\pi$ relatively ample for any $\varepsilon >
    0$.
  \end{enumerate}
  Two families $(C, s_1, \dotsc, s_n, \mathcal O_C^{m + 1} \to
  \mathcal Q)$, $(C', s'_1, \dotsc, s'_n, \mathcal O_{C'}^{m + 1} \to
  \mathcal Q')$ of stable quotients over $S$ are isomorphic if there
  exists an isomorphism $\phi: C \to C'$ over $S$ mapping $s_i$ to
  $s'_i$ for all $i \in \{1, \dotsc, n\}$ and such that $S$ and
  $\phi^*(S')$ coincide when viewed as subsheaves of $\mathcal
  O_{C'}^{m + 1}$.
\end{defi}

Notice that there is no condition on the sections of weight
$0$. Therefore the space $\Qbar_{g, (\mathbf w, 0^f)}(\mathbb P^m,
d)$ is isomorphic to the $f$-fold power of the universal curve of
$\Qbar_{g, \mathbf w}(\mathbb P^m, d)$ over $\Qbar_{g, \mathbf
  w}(\mathbb P^m, d)$.

In order to prove that $\Qbar_{g, \mathbf w}(\mathbb P^m, d)$ is a
Deligne-Mumford stack it is enough to show that it is locally
isomorphic to a product of universal curves over $\Qbar_{g,
  1^f}(\mathbb P^m, d)$ since this is a Deligne-Mumford stack as shown
in \cite{MOP11} (by realizing it as a stack quotient of a locally
closed subscheme of a relative Quot scheme).
\begin{lem}
  For each point $(C, s_1, \dotsc, s_n, \mathcal O_C^{m + 1} \to
  \mathcal Q)$ in $\Qbar_{g, \mathbf w}(\mathbb P^m, d)$ there exists
  an open neighborhood which is isomorphic to an open substack of
  $\Qbar_{g, \mathbf w'}(\mathbb P^m, d)$ where $\mathbf w'$ is $\{0,
  1\}$ valued.
\end{lem}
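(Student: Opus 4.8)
The plan is to exploit the fact that the weight data $\mathbf w$ enters the definition of $\Qbar_{g, \mathbf w}(\mathbb P^m, d)$ only through two features of an object: the coincidence condition deciding which subsets of sections may meet (and which may meet the singular locus), and the relative ampleness condition, which decides which components are stable. Both of these depend on $\mathbf w$ only through a finite list of inequalities, and near a fixed object only very few of them are active. Following the reduction-to-$\{0,1\}$-weights technique of \cite{MR2427420} and \cite{MR2567394}, I would fix the object $\mathbf p = (C, s_1, \dotsc, s_n, \mathcal O_C^{m+1} \to \mathcal Q)$ and record its combinatorial type: the partition of the markings into \emph{collision classes} (maximal subsets meeting at a common point of $C$), the information of which classes sit at a node, and the stability status of each component of $C$. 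The coincidence condition forces every class lying on a node to have total $\mathbf w$-weight zero, and every class at a smooth point to have total weight at most $1$.

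Next I would define the replacement weights $\mathbf w'$ by rounding each collision class to a single $\{0,1\}$-value. On every marking already of weight $0$, and on every marking lying on a node, set $w_i' = 0$. On each class supported at a smooth point, assign weight $1$ to one marking and $0$ to the others, or $0$ to the whole class, the choice being dictated by the requirement that the total weight at that point stay on the same side of every stability threshold as the original total (so that the ampleness status of the component carrying the class is unchanged); since the class total lies in $]0,1]$, a single $\{0,1\}$-value suffices to match the threshold. The resulting $\mathbf w'$ is $\{0,1\}$-valued, and the tuple $\mathbf p$ is a valid object of $\Qbar_{g, \mathbf w'}(\mathbb P^m, d)$: the coincidence condition holds because within each class all partial sums remain $\le 1$ while node classes retain total weight $0$, and the relative ampleness condition holds by the threshold-preserving choice above.

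The isomorphism is then induced by the identity on the underlying data, since objects of $\Qbar_{g, \mathbf w}(\mathbb P^m, d)$ and of $\Qbar_{g, \mathbf w'}(\mathbb P^m, d)$ are the same kind of tuple and differ only in which stability conditions are imposed. I would let $U \subseteq \Qbar_{g, \mathbf w}(\mathbb P^m, d)$ be the open substack on which no two markings lying in distinct collision classes of $\mathbf p$ meet and on which every marking at a smooth point of $\mathbf p$ stays in the smooth locus; openness follows from the closedness of the coincidence and non-smooth loci, and $\mathbf p \in U$. Defining $U' \subseteq \Qbar_{g, \mathbf w'}(\mathbb P^m, d)$ by the same open conditions, every collision realized over $U$ is a sub-collision of a class of $\mathbf p$ and every node-incidence is one already present in $\mathbf p$; by the construction of $\mathbf w'$ the two sets of stability conditions then single out exactly the same families, so the identity functor yields an isomorphism $U \cong U'$ of open substacks. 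This is the desired reduction, after which $\{0,1\}$-valued weights further express the space as a power of the universal curve over $\Qbar_{g, 1^f}(\mathbb P^m, d)$, known to be Deligne-Mumford by \cite{MR2851074}.

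The step I expect to be the main obstacle is verifying, at the level of families over an arbitrary base rather than merely at geometric points, that the two stability conditions genuinely coincide on $U$. The delicate part is the relative ampleness condition: one must check the equivalence in \emph{both} directions after altering the polarizing divisor $\sum w_i s_i$, which is why the rounding must preserve each component's stability type exactly rather than merely enlarging the divisor. This requires a chamber analysis of the thresholds together with semicontinuity, ensuring that after shrinking the neighborhood the combinatorial type can only specialize from that of $\mathbf p$ and that the allowed degenerations do not move any component across a stability wall; once this is in place, the equivalence of the coincidence inequalities is purely numerical by the construction of $\mathbf w'$.
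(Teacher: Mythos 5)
Your overall strategy --- record the collision classes of the fixed object, round each class to a $\{0,1\}$-valued weight so as to preserve the active stability inequalities, and compare the two moduli problems via the identity on underlying data over an open neighborhood --- is the expected one; the paper itself gives no details and simply invokes the argument of \cite[Corollary~1.18]{MR2427420}. But your execution has a genuine gap, sitting exactly at the step you yourself flag as the ``main obstacle'': on the neighborhood $U$ as you define it, the two stability conditions do \emph{not} coincide, and no choice of rounding can repair this. Take $p$ to be a smooth genus~$1$ curve with trivial quotient data ($d = 0$, $\mathcal S = \mathcal O_C$ a coordinate factor of $\mathcal O_C^{m+1}$) and two markings $p_1, p_2$ of weight $\tfrac 12$ placed at one common smooth point, so there is a single collision class $\{1,2\}$ of total weight $1$. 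The roundings $(0,0)$ and $(1,1)$ exclude $p$ itself (the first kills ampleness in genus $1$ with $d = 0$, the second violates the collision condition at the common point), so say $\mathbf w' = (1,0)$. Now let $q$ be the object whose curve is a cycle of two rational curves $C_1 \cup C_2$ meeting in two nodes (arithmetic genus $1$), with $p_1 \in C_1$ and $p_2 \in C_2$ at smooth points, again with trivial quotient data. Each $C_i$ has $\mathbf w$-degree $-2 + 2 + \tfrac 12 > 0$, so $q$ is $\mathbf w$-stable, and $q$ lies in your $U$: the cross-class condition is vacuous (there is only one class) and no marking hits a node. Yet $q$ is not $\mathbf w'$-stable, since $C_2$ has degree $-2 + 2 + 0 = 0$, which is not ample. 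So the identity functor does not even carry $U$ into $\Qbar_{g,\mathbf w'}(\mathbb P^m, d)$; the rounding $(0,1)$ fails symmetrically.

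What your argument misses is that when the markings of a \emph{single} collision class separate --- which your open conditions allow, since they only forbid collisions across classes and collisions with nodes --- the original weights and the rounded weights distribute differently over the components of a degeneration, so a sub-collision can push a component across the stability wall. This contradicts your assertion that on $U$ ``the two sets of stability conditions single out exactly the same families'', and it is not cured by semicontinuity or by restricting to specializations of the combinatorial type of $p$: the object $q$ above \emph{is} such a specialization. The lemma itself survives, because the offending objects form a closed locus contained in the boundary, hence disjoint from any small enough neighborhood of $p$; the correct proof must shrink $U$ by excising, for each class $K$, the closed locus of curves on which some component's degree computed with $\mathbf w$ and with $\mathbf w'$ lie on opposite sides of $0$ (in the example, curves where $p_2$ sits on a two-noded component not containing $p_1$). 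Carrying out that excision and checking the resulting equivalence of the two moduli problems is precisely the content of the chamber/specialization analysis in \cite{MR2427420} that you deferred, so as written your proposal does not establish the statement.
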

\begin{proof}
  The argument is the same as in \cite[Corollary~1.18]{AlGu08}.
\end{proof}

Separatedness follows from the following lemma, which is analogous to
\cite[Proposition~1.3.4]{BaMa09}.
\begin{lem}
  The diagonal $\Delta: \Qbar_{g, \mathbf w}(\mathbb P^m, d) \to
  \Qbar_{g, \mathbf w}(\mathbb P^m, d) \times \Qbar_{g, \mathbf
    w}(\mathbb P^m, d)$ is representable, finite and separated.
\end{lem}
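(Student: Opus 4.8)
The plan is to establish the three properties in turn, in each case reducing to known facts about the underlying weighted pointed curves together with the Quot-scheme construction of the unweighted stable quotient spaces used in the previous lemmas.

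For representability, I would observe that giving an isomorphism between two families $(C, s_1, \dotsc, s_n, \mathcal O_C^{m+1} \to \mathcal Q)$ and $(C', s'_1, \dotsc, s'_n, \mathcal O_{C'}^{m+1} \to \mathcal Q')$ over a base scheme $T$ amounts to an isomorphism $\phi \colon C \to C'$ over $T$ carrying $s_i$ to $s'_i$ such that $\phi^*\mathcal S'$ and $\mathcal S$ agree as subsheaves of $\mathcal O^{m+1}$. The isomorphisms of the underlying $n$-pointed nodal curves respecting the sections are parametrised by an algebraic space over $T$, by representability of the diagonal of the stack of pointed nodal curves, a standard consequence of Hilbert-scheme theory. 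Over this $\mathrm{Isom}$-space the two rank-one subsheaves can be compared: the condition that the induced map $\phi^*\mathcal S' \to \mathcal O^{m+1}$ factor through $\mathcal S$ and there induce an isomorphism cuts out a locally closed subscheme. Thus $\mathrm{Isom}_T$ is a locally closed subspace of an algebraic space, and $\Delta$ is representable.

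Finiteness and separatedness I would obtain from the valuative criterion. Quasi-finiteness is immediate: the geometric fibre of $\mathrm{Isom}_T$ over a point $t$ is either empty or a torsor under $\mathrm{Aut}(\xi_t)$, which is finite because the relative ampleness condition rigidifies the stable quotient. For properness, let $R$ be a discrete valuation ring with fraction field $K$ and suppose two stable quotients over $\mathrm{Spec}(R)$ are isomorphic over $\mathrm{Spec}(K)$. The ampleness condition singles out the Hassett-stable model of the underlying weighted curve, so by separatedness of $\Mbar_{g,\mathbf w}$ the isomorphism of generic fibres extends uniquely to some $\phi \colon C \to C'$ over $R$. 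It then remains to check that $\phi^*\mathcal S'$ and $\mathcal S$ still coincide over the special fibre: both are subsheaves of $\mathcal O^{m+1}$ flat over $R$ that agree after restriction to $\mathrm{Spec}(K)$, and since a subsheaf flat over $R$ is the schematic closure of its generic fibre, the two coincide over all of $\mathrm{Spec}(R)$. Uniqueness of this extension gives separatedness, and together with quasi-finiteness it yields finiteness of $\Delta$.

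The main obstacle is this last step, the control of the flat limit of $\mathcal S$ over the special fibre. One must verify that the schematic closure of the generic subsheaf produces exactly the data of a stable quotient in each family, and in particular that its quotient acquires torsion only at the nodes and at the colliding weight-one markings permitted by the definition, so that no ambiguity in how the torsion is distributed can cause the two limits to differ. This is precisely the phenomenon controlled in \cite[Proposition~1.3.4]{MR2567394}, whose argument carries over once the weighting of the markings is taken into account.
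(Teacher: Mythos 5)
Your representability discussion and your observation that a subsheaf of $\mathcal O_C^{m+1}$ with quotient flat over the base is determined by its restriction to the generic fibre are both fine. The genuine gap is in the properness step, at the sentence ``the ampleness condition singles out the Hassett-stable model of the underlying weighted curve, so by separatedness of $\Mbar_{g,\mathbf w}$ the isomorphism of generic fibres extends uniquely to some $\phi\colon C \to C'$ over $R$.'' The underlying weighted pointed curve of a stable quotient is \emph{not} Hassett-stable in general: stability of a stable quotient only requires $\omega_C\left(\sum_i w_i s_i\right)\otimes \mathcal S^{\otimes(-\varepsilon)}$ to be ample, so rational bridges and tails on which $\mathcal S$ has strictly negative degree --- for instance components carrying torsion of $\mathcal Q$ --- are allowed, and these are exactly the configurations that Hassett stability forbids. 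Consequently $C$ and $C'$ over $\mathrm{Spec}(R)$ need not be families of $\mathbf w$-stable curves, separatedness of $\Mbar_{g,\mathbf w}$ does not apply to them, and the extension of $\phi$ across the special fibre --- which is the actual crux of the lemma, since the two special fibres could a priori sprout different destabilizing components with the torsion of the quotient distributed differently --- is left unproven. Your closing paragraph misidentifies the obstacle: once $\phi$ exists, comparing the two subsheaves by uniqueness of flat limits is the easy part, as you yourself show; what needs work is producing $\phi$.

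The paper resolves this by rigidification rather than by a direct valuative criterion. The image of $(\mathcal O_C^{m+1})^*\to\mathcal S^*$ is $\mathcal S^*(-D)$ for an effective divisor $D$ of some degree $d'\le d$ supported where $\mathcal Q$ has torsion; this yields $d'$ extra sections, and, \'etale locally, marking points where the quotient sequence agrees with that of suitable hyperplanes $H_i\subset\mathbb C^{m+1}$ yields $d-d'$ more. The resulting $(n+d)$-pointed curves \emph{are} $(\mathbf w,\varepsilon^d)$-stable, precisely because the added points record the data that restores ampleness on the destabilizing components. The Isom functor of stable quotients then embeds as a closed subscheme of a disjoint union, over orderings $\sigma\in S_{d'}$ of the torsion points, of Isom schemes of Hassett-stable weighted curves, and finiteness and separatedness follow from Hassett's results. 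If you wanted to salvage your valuative-criterion argument you would first have to perform this same rigidification, so that the special fibres become honest stable weighted curves before separatedness of a Hassett space can be invoked --- at which point you are essentially reproducing the paper's proof.
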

\begin{proof}
  We proceed as in \cite[Proposition~1.3.4]{BaMa09}.

  Let $(C, s_1, \dotsc, s_n, \mathcal O_C^{m + 1} \to \mathcal Q)$ and
  $(C', s'_1, \dotsc, s'_n, \mathcal O_{C'}^{m + 1} \to \mathcal Q')$
  be two stable quotients over a base scheme $S$. We need to show that
  the category
  \begin{equation*}
    \mathrm{Isom}((C, s_1, \dotsc, s_n, \mathcal O_C^{m +
      1} \to \mathcal Q), (C', s'_1, \dotsc, s'_n, \mathcal O_{C'}^{m +
      1} \to \mathcal Q'))
  \end{equation*}
  is represented by scheme finite and separated over $S$.

  The images $T$ and $T'$ of the maps $(\mathcal O_C^{m + 1})^* \to
  S^*$ and $(\mathcal O_{C'}^{m + 1})^* \to (S')^*$ are given by $T =
  S^*(-D)$, $T' = (S')^*(-D')$ for effective divisors $D$, $D'$ of
  some degree $d' \le d$ on $C$ respectively $C'$. From this we can
  construct $d'$ additional sections $s_{n + 1}, \dotsc, s_{n + d'}$
  for $C$ and $s'_{n + 1}, \dotsc, s'_{n + d'}$ for $C'$. As in
  \cite[Proposition~1.3.4]{BeMa96} at least \'etale locally $d -
  d'$ \footnote{Actually $(m + 1)(d - d')$} further sections $s_{n +
    d'}, \dotsc, s_{n + d}$ for $C$ and $s'_{n + d'}, \dotsc, s'_{n +
    d}$ for $C'$ can be constructed by choosing suitable hyperplanes
  $H_i$ in $\mathbb C^{m + 1}$ and marking sections at which the
  quotient is locally free and the quotient sequence coincides with
  the sequence corresponding to the inclusion of $H_i$ in $\mathbb
  C^{m + 1}$.

  By the definition of stable quotients the resulting marked curves
  $(C, s_1, \dotsc, s_{n + d})$ and $(C', s'_1, \dotsc, s'_{n + d})$
  are $(\mathbf w, \varepsilon^d)$-stable. This gives a closed
  embedding
  \begin{align*}
    \mathrm{Isom}((C, s_1, \dotsc, s_n, \mathcal O_C^{m + 1} \to
    \mathcal Q), (C', s'_1, \dotsc, s'_n, \mathcal O_{C'}^{m +
      1} \to \mathcal Q')) \hookrightarrow\\
    \bigsqcup_{\sigma \in S_{d'}} \hspace{-2.5mm}\mathrm{Isom}((C, s_1,
    \dotsc, s_{n + d}), (C', s'_1, \dotsc, s'_n, s'_{n + \sigma(1)},
    \dotsc, s'_{n + \sigma(d')}, s'_{n + d' + 1}, \dotsc, s'_{n + d})).
  \end{align*}
  Since by \cite{Ha03} the right hand side is a scheme finite and
  separated over $S$ so is the left hand side.
\end{proof}

\begin{lem}
  \label{lem:comparison}
  There is a surjective comparison map $c: \Mbar_{g, \mathbf
    w}(\mathbb P^m, d) \to \Qbar_{g, \mathbf w}(\mathbb P^m, d)$.
\end{lem}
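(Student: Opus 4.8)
The plan is to construct, for any given stable quotient over an algebraically closed field, a stable map to $\mathbb P^m$ of the same degree that maps to it under $c$. Since $c$ contracts unstable components and introduces torsion according to the local degree (as described in the excerpt), surjectivity amounts to showing that every stable quotient arises this way. First I would recall the structure of a point $(C, s_1, \dotsc, s_n, \mathcal O_C^{m+1} \to \mathcal Q)$ of $\Qbar_{g, \mathbf w}(\mathbb P^m, d)$: the subsheaf $\mathcal S \subset \mathcal O_C^{m+1}$ is a line bundle, and the quotient $\mathcal Q$ fails to be locally free exactly at finitely many points of $C$ (torsion located away from nodes and weight-$1$ markings). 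Away from this torsion locus, the quotient sequence defines an honest morphism $f: C \setminus (\text{torsion}) \to \mathbb P^m$. The degree of the torsion at a point, together with the degree of $f$, sums to $d$.

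The key step is to reconstruct a stable map by "blowing up" the torsion. At each torsion point $p$ of length $\ell_p$, I would attach a rational bridge (a chain of $\mathbb P^1$'s) to $C$ and spread the torsion out into an honest degree-$\ell_p$ map on the attached component, reversing the contraction that $c$ performs. Concretely, the line bundle $\mathcal S^\vee$ has a section vanishing to order $\ell_p$ at $p$; one replaces the local torsion with a genuine map from a bubbled rational tail on which the vanishing is resolved into a degree-$\ell_p$ morphism to $\mathbb P^m$. Carrying this out at every torsion point produces a prestable curve $\widetilde C$ with a genuine morphism $\widetilde f: \widetilde C \to \mathbb P^m$ of total degree $d$, which by construction contracts back to the original stable quotient under $c$. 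I would then verify Gromov-Witten stability: the added rational components each carry a nonconstant map, hence are automatically stable, and the weighted stability of the original pointed curve is inherited on the contracted part. This shows the constructed stable map lies in $\Mbar_{g, \mathbf w}(\mathbb P^m, d)$ and maps to the chosen point.

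Finally, I would note that this pointwise construction globalizes: over a general base one resolves the torsion of $\mathcal Q$ in families, using that the relative Quot scheme realization from \cite{MR2851074} controls the torsion locus flatly, so the bubbling can be performed consistently. The main obstacle I expect is precisely this family-level resolution of torsion into bubbled components — ensuring the attached rational bridges vary flatly and that no monodromy obstructs gluing them into a single stable map over the base. On geometric points the reconstruction is transparent, and since $c$ is a morphism of proper Deligne-Mumford stacks, surjectivity can in fact be checked on geometric points (the image is closed, being the image of a proper stack, and dense by the construction above), which lets me reduce the global statement to the pointwise one and sidestep the hardest part of the family argument.
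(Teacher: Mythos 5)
The lemma has two parts: the existence of $c$ as a morphism of stacks and its surjectivity, and your proposal only addresses the second. You take the map itself as given (``as described in the excerpt''), but the description in Section~2.2 is exactly that --- a description; the content of the lemma, and of the proof the paper actually invokes (Toda's Lemma~2.23 in \cite{MR2834730}), is the family-level construction of $c$: given a family of weighted stable maps over an arbitrary base, one must contract the stable-quotient-unstable rational components \emph{relatively} (e.g.\ via a relative Proj of the section algebra of a suitable relatively ample line bundle) and show that the push-forward of the pulled-back tautological sequence is a flat family of quotient sheaves acquiring torsion of the prescribed length along the contracted locus. Your closing move --- ``surjectivity can be checked on geometric points'' --- is correct as far as it goes, but it only sidesteps the family argument for \emph{surjectivity}; it cannot manufacture the morphism. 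So the hardest part of the statement is precisely the part your plan declares avoidable, and this is a genuine gap rather than a stylistic difference: the paper's one-line citation is carrying exactly this load.

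The pointwise surjectivity argument you give (saturate $\mathcal S$ to get an honest map on $C$, then reattach rational components of the right degree at the torsion points) is the standard one and is essentially correct, with one subtlety you gloss over: torsion is only required to avoid nodes and markings of weight $1$, so it may sit at a marking $p_i$ with $w_i < 1$ (or at several coincident markings of total weight $\le 1$). At such a point you cannot simply attach a bubble, since the marking would then lie at the new node, which is forbidden; you must move those markings onto the attached rational component, and then check weighted GW-stability of the result (non-constant components are automatically stable, and on constant components of $C$ the extra node created by the bubbling only helps ampleness, so this works out). Also, a single $\mathbb P^1$ at each torsion point suffices; ``bridge'' is the wrong word for what is a rational tail, though chains would also get fully contracted by $c$ and hence do no harm.
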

\begin{proof}
  This is similar to \cite[Lemma~2.23]{To11}.
\end{proof}

\begin{lem}
  For $\mathbf w' \le \mathbf w$ there is a surjective reduction
  morphism
  \begin{equation*}
    \rho_{\mathbf w'\mathbf w}: \Qbar_{g, \mathbf w}(\mathbb P^m, d) \to \Qbar_{g, \mathbf w'}(\mathbb P^m, d).
  \end{equation*}
\end{lem}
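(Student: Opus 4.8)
The plan is to construct $\rho_{\mathbf w'\mathbf w}$ by combining the Hassett reduction morphism on the level of the underlying weighted curve with a transport of the quotient data along the resulting contraction, in the same spirit as the reduction morphisms for weighted stable maps in \cite{MR2427420} and \cite{MR2567394}. First I would work with a single family $(C, s_1, \dotsc, s_n, \mathcal O_C^{m+1} \to \mathcal Q)$ over a base $S$ and apply Hassett's construction to the pointed curve $(C, s_1, \dotsc, s_n)$ with the weights lowered from $\mathbf w$ to $\mathbf w'$. This produces a contraction $\phi \colon C \to C'$ over $S$ that collapses precisely the connected trees of genus $0$ components on which the $\varepsilon$-twisted ampleness of Condition~(5) fails for $\mathbf w'$, sending the markings on a contracted tree to its attaching node; on the open locus where no contraction is needed (for instance smooth fibers with pairwise distinct markings) $\phi$ is an isomorphism.

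Next I would transport the quotient sequence. Using $\mathcal O_C^{m+1} = \phi^*\mathcal O_{C'}^{m+1}$ and the fact that $\phi_*\mathcal O_C = \mathcal O_{C'}$ for a contraction of rational trees, the inclusion $\mathcal S \hookrightarrow \mathcal O_C^{m+1}$ pushes forward to a map $\phi_*\mathcal S \to \mathcal O_{C'}^{m+1}$; I would define $\mathcal S' \subseteq \mathcal O_{C'}^{m+1}$ to be the saturation of its image, which is a line bundle, and set $\mathcal Q' = \mathcal O_{C'}^{m+1}/\mathcal S'$. The degree of $\mathcal Q$ carried by a contracted component reappears as torsion of $\mathcal Q'$ of equal length supported at the image node, so that $\mathcal Q'$ again has degree $d$ with torsion away from the nodes and the weight-$1$ markings, while $\mathcal S'$ is locally free of rank $1$; the twisted bundle of Condition~(5) for $\mathbf w'$ is ample on $C'$ because the contraction removed exactly the components where it failed. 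This shows that $(C', \phi(s_1), \dotsc, \phi(s_n), \mathcal O_{C'}^{m+1} \to \mathcal Q')$ is a $\mathbf w'$-stable quotient.

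The main obstacle is to check that this assignment behaves well in families and defines a morphism of stacks. Concretely, the delicate points are that $\mathcal S'$ and $\mathcal Q'$ should be flat over $S$ with $\mathcal S'$ fibrewise locally free of rank $1$, which requires that forming the saturation commute with base change along $S$, and that Condition~(5) hold for every $\varepsilon > 0$ rather than for a single value. Both can be reduced to the local structure near the contracted loci, where $\phi$ looks like the standard contraction of a chain of $\mathbb P^1$'s and the weight data is $\{0, 1\}$-valued, so the analysis falls under the models treated in \cite{MR2427420} and \cite{MR2567394}. Functoriality in $S$ and the cocycle relation $\rho_{\mathbf w''\mathbf w'} \circ \rho_{\mathbf w'\mathbf w} = \rho_{\mathbf w''\mathbf w}$ then follow from the corresponding properties of Hassett's reduction together with the uniqueness of the saturation.

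For surjectivity, since $\Qbar_{g, \mathbf w}(\mathbb P^m, d)$ is proper and $\Qbar_{g, \mathbf w'}(\mathbb P^m, d)$ is separated, the image of $\rho_{\mathbf w'\mathbf w}$ is closed, so it suffices to hit a dense subset. The cleanest route is to compare with stable maps: the reduction morphisms for $\Mbar_{g, \mathbf w}(\mathbb P^m, d)$ supplied by \cite{MR2427420} and \cite{MR2567394} are surjective and compatible with the comparison maps, giving a commuting square $\rho_{\mathbf w'\mathbf w} \circ c_{\mathbf w} = c_{\mathbf w'} \circ \rho^{\mathrm{GW}}_{\mathbf w'\mathbf w}$; since $c_{\mathbf w'}$ is surjective by Lemma~\ref{lem:comparison}, so is $\rho_{\mathbf w'\mathbf w}$. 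Geometrically this amounts to producing a $\mathbf w$-stable preimage of a given $\mathbf w'$-stable quotient by inserting rational bridges to separate markings forbidden to collide under $\mathbf w$ and, where needed, redistributing torsion of $\mathcal Q'$ onto these bridges as honest degree.
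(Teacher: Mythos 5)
Your overall strategy (contract via Hassett reduction, transport the quotient data, compare with weighted stable maps for surjectivity) is reasonable, but the sheaf-transport step contains a genuine error that breaks the construction. Take the basic case of a single fiber $C = C_0 \cup T$ with $T$ a rational tail contracted by $\phi$ to $p' \in C'$, and suppose $\mathcal Q$ has degree $\delta > 0$ on $T$, so that $\mathcal S|_T$ is a sub-line-bundle of $\mathcal O_T^{m+1}$ of degree $-\delta$ and hence has no nonzero global sections. Any section of $\mathcal S$ over $\phi^{-1}(U)$, $U \ni p'$, must then vanish identically on $T$, so the image of $\phi_*\mathcal S \hookrightarrow \mathcal O_{C'}^{m+1}$ is $\mathcal S|_{C_0}(-p')$, and its saturation is $\mathcal S|_{C_0}$ itself (which is already saturated at the node, since the torsion of $\mathcal Q$ is required to be away from the nodes). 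Thus with your definition $\mathcal Q' = \mathcal O_{C'}^{m+1}/\mathcal S'$ is torsion-free at $p'$ and has degree $d - \delta$: the degree carried by $T$ is destroyed rather than converted into torsion. This contradicts your own assertion that it "reappears as torsion of equal length" --- saturation by definition makes the quotient torsion-free, so it is the wrong operation; to create torsion in the quotient you must shrink the subsheaf, not enlarge it. (Even the unsaturated image is wrong for $\delta > 1$, since it lowers the degree by only one.) As stated, your assignment does not land in $\Qbar_{g, \mathbf w'}(\mathbb P^m, d)$ at all. The correct transport twists down by the full lost degree, $\mathcal S' = (\phi_*\mathcal S)^{\mathrm{sat}}\bigl(-\sum_i \delta_i p'_i\bigr)$, where $\delta_i$ is the degree of $\mathcal Q$ on the tree contracted to $p'_i$; this is exactly how the comparison map of \cite[Lemma~2.23]{MR2834730} introduces torsion, and it is this twist that gives $\mathcal Q'$ degree $d$ with torsion of length $\delta_i$ at $p'_i$.

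It is also worth noting that the paper never performs this direct construction: its proof is a single line, deducing the lemma from the comparison map of Lemma~\ref{lem:comparison} and the reduction morphism for weighted stable maps \cite[Proposition~1.2.1]{MR2567394} --- in effect, precisely the commuting square $\rho_{\mathbf w'\mathbf w} \circ c_{\mathbf w} = c_{\mathbf w'} \circ \rho^{\mathrm{GW}}_{\mathbf w'\mathbf w}$ that you reserve for the surjectivity step. There one defines $\rho_{\mathbf w'\mathbf w}$ by descending $c_{\mathbf w'} \circ \rho^{\mathrm{GW}}_{\mathbf w'\mathbf w}$ along the surjection $c_{\mathbf w}$, and surjectivity is automatic from surjectivity of $\rho^{\mathrm{GW}}_{\mathbf w'\mathbf w}$ and $c_{\mathbf w'}$; the flatness and base-change issues you flag are then inherited from the stable-maps side rather than re-proved. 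So the portion of your proposal that coincides with the paper is sound, while the added direct construction is the part that fails; it is repairable by replacing saturation with the twist described above.
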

\begin{proof}
  This follows from Lemma~\ref{lem:comparison} and the corresponding
  result for stable maps (see \cite[Proposition~1.2.1]{BaMa09}).
\end{proof}

Since $\Qbar_{g, \mathbf w}(\mathbb P^m, d)$ is proper for $\mathbf w
= 1^n$ the preceding lemma implies that $\Qbar_{g, \mathbf w}(\mathbb
P^m, d)$ is proper in general.

\section{Localization}
\label{sec:loc}

The virtual localization formulas \cite{GrPa99} for $\Qbar_{g,
  \mathbf w}(\mathbb P^m, d)$ and $\Mbar_{g, \mathbf w}(\mathbb P^m,
d)$ are the main tool we use to derive stable quotient relations. The
existence of the necessary virtual fundamental class $[\Mbar_{g,
  \mathbf w}(\mathbb P^m, d)]^{vir}$ for $\Mbar_{g, \mathbf w}(\mathbb
P^m, d)$ has been shown in \cite{AlGu08} and \cite{BaMa09}. For
the existence of a $2$-term obstruction theory of $\Qbar_{g, \mathbf
  w}(\mathbb P^m, d)$ the same arguments as in
\cite[Section~3.2]{MOP11} can be used. They depend on existence of
a $\nu$-relative $2$-term obstruction theory of the $\mathrm{Quot}$
scheme and the non-singularity of the Hassett moduli spaces of
weighted curves. We will now follow Section~7 of \cite{MOP11}.

In this paper we will only look at torus actions on moduli spaces of
stable quotients or stable maps of $\mathbb P^1$ which are induced
from the diagonal action of $\mathbb C^*$ on $\mathbb C^2$ given by
$([x_0: x_1], \lambda) \mapsto [x_0: \lambda x_1]$. By $s$ we will
denote the pull-back of the equivariant class $s \in A_{\mathbb
  C^*}^1(\mathrm{pt})$ defined as the first equivariant Chern class of
the trivial rank 1 bundle on a point space with weight one action of
$\mathbb C^*$ on it.

The equivariant cohomology of $\mathbb P^1$ is generated as an algebra
over $\mathbb Q[s]$ by the equivariant classes $[0]$, $[\infty]$ of
the two fixed points $0$ and $\infty$. These classes satisfy (among
others) the relation $[0] - [\infty] = s$. Localizing by $s$, the
classes $[0]$ and $[\infty]$ give a basis of $A_{\mathbb C^*}^*(\PP^1)$ as
a free $\mathbb Q[s, s^{-1}]$-module.

\subsection{Fixed loci}
\label{sec:loc:fixed}

The fixed loci for the action of $\mathbb C^*$ on $\Mbar_{g, \mathbf
  w}(\mathbb P^1, d)$ and $\Qbar_{g, \mathbf w}(\mathbb P^1, d)$ are
very similar. They are both parametrized by the data of
\begin{enumerate}
\item a graph $\Gamma = (V, E)$,
\item a coloring $\zeta: V \to \{\pm 1\}$,
\item a genus assignment $g: V \to \mathbb Z_{\ge 0}$,
\item a map $d: V \sqcup E \to \mathbb Z_{\ge 0}$,
\item a point assignment $p: \{1, \dotsc, n\} \to V$,
\end{enumerate}
such that $\Gamma$ is connected and contains no self-edges, two
vertices directly connected by an edge do not have the same color
$\zeta$,
\begin{align*}
  g =& h^1(\Gamma) + \sum_{v \in V} g(v), \\
  d =& \sum_{i \in V \sqcup E} d(i), \quad d|_E \ge 1,
\end{align*}
and one further condition which depends on whether we look at
$\Mbar_{g, \mathbf w}(\mathbb P^1, d)$ or $\Qbar_{g, \mathbf
  w}(\mathbb P^1, d)$.

To state the stable quotient condition we need the following
definition: A vertex $v \in V$ is called non-degenerate if the
inequality
\begin{equation*}
  2g(v) - 2 + n(v) + \varepsilon d(v) > 0
\end{equation*}
holds for any $\varepsilon > 0$. Here $n(v)$ is the number of edges at
$v$ plus the number of preimages under $p$ weighted by the
corresponding weight.

Then for the combinatorial data on the stable quotients side we demand
each vertex to be non-degenerate, whereas for the stable maps data the
additional condition is $d|_V = 0$.

In the combinatorial data for $\Mbar_{g, \mathbf w}(\mathbb P^1, d)$
the vertices $v$ of $\Gamma$ correspond to curve components contracted
to the fixed point of $\mathbb P^1$ specified by $\zeta$, that is $0$
for $\zeta(v) = 1$ and $\infty$ for $\zeta(v) = -1$. The edges
correspond to multiple covers of $\mathbb P^1$ ramified only at $0$
and $\infty$ with degree determined by the map $d$. For $\Qbar_{g,
  \mathbf w}(\mathbb P^1, d)$ the vertices of $\Gamma$ correspond to
components $C$ of the curve over which the subsheaf $S$ is an ideal
sheaf of the trivial subsheaf given by one of the two\footnote{These
  correspond to the two fixed points in $\mathbb P^1$.} 1-dimensional
fixed subspaces of $\mathbb C^2$, $\zeta$ specifies which subspace of
$\mathbb C^2$ was chosen and $-d$ the degree of $S$. Edges correspond
to multiple covers of $\mathbb P^1$ ramified in the two torus-fixed
points.

The fixed locus corresponding to the combinatorial data is, up to a
finite map, isomorphic to the product
\begin{equation*}
  \prod_{v \in V} \Mbar_{g(v), (\mathbf w(v), \varepsilon^{d(v)})} / S_{d(v)},
\end{equation*}
where $\mathbf w(v)$ is a $(|p^{-1}(v)| + |\{e\text{ adjacent to
}v\}|)$-tuple, which we will index by $p^{-1}(v) \sqcup \{e\text{
  adjacent to }v\}$, such that $w(v)_i = w_i$ if $i \in p^{-1}(v)$ and
$w(v)_e = 1$ for adjacent edges $e$. The symmetric group $S_{d(v)}$
permutes the $\varepsilon$-stable points. The product should be taken
only over all non-degenerate vertices.

\subsection{The formula}
\label{sec:loc:formula}

The virtual localization formula expresses the virtual fundamental
class as a sum of the (virtual) fundamental classes of the fixed loci
$X$ weighted by the inverse of the equivariant Euler class of the
corresponding virtual normal bundle $N_X$. In order to make sense of
this inverse it is necessary to localize the equivariant cohomology
ring by $s$.

The inverse of the Euler class of $N_X$ is in both cases a product
\begin{equation*}
  \prod_v \Cont(v) \prod_{e\text{ edge}} \Cont(e),
\end{equation*}
for certain vertex and edge contributions depending only on the data
of the graph corresponding to the vertex or edge. The contributions
$\Cont(e)$ and $\Cont(v)$ for $v$ degenerate are pulled back from the
equivariant cohomology of a point. We will not need to know the exact
form of the edge and unstable vertex contributions here apart from the
fact that the contribution of a vertex $v$ with $d(v) = g(v) = 0$ and
$|\mathbf w(v)| = 1$ is equal to $1$.

The non-degenerate vertex contribution is pulled back from
\begin{equation*}
  A_{\mathbb C^*}^*(\Mbar_{g(v), (\mathbf w(v), \varepsilon^{d(v)})} / S_{d(v)}) \otimes \mathbb Q[s, s^{-1}].
\end{equation*}
It is given by
\begin{equation}
  \label{eq:sqvertex}
  \Cont(v) =  (\zeta(v)s)^{g(v) - d(v) - 1} \sum_{j = 0}^\infty \frac{c_i(\mathbb F_{d(v)})}{(\zeta(v)s)^i} \prod_e \frac{\zeta(v)s}{\omega_e^v - \psi_e},
\end{equation}
where $\mathbb F_d$ is the K-theory class $\mathbb F_d = \mathbb E^* -
\mathbb B_d - \mathbb C$, the product is over edges $e$ adjacent to
$v$ and
\begin{equation*}
  \omega_e^v = \frac{\zeta(v)s}{d(e)}.
\end{equation*}
The dual $\mathbb E^*$ of the Hodge bundle $\mathbb E$ has fiber
\begin{equation*}
  (H^0(C, \omega_C))^*
\end{equation*}
over a marked curve $(C, p_i)$, the rank $d$ bundle $\mathbb B_d$ on
$\Mbar_{g, (\mathbf w, \varepsilon^d)} / S_d$ has the fiber
\begin{equation*}
  H^0(C, \mathcal O_C(p_{n + 1} + \dotsb + p_{n + d})|_{p_{n + 1} + \dotsb + p_{n + d}})
\end{equation*}
over a marked curve $(C, p_i)$. The bundle $\mathbb B_d$ can also be
thought as an $S_d$ invariant bundle living on $\Mbar_{g, (\mathbf w,
  \varepsilon^{d})}$. In \cite{MOP11} the Chern classes of this
lifted bundle have been computed:
\begin{equation*}
  c(-\mathbb B_d) = \prod_{i = n + 1}^{n + d} \frac 1{1 - \psi_{n + i} - \sum_{j = n + 1}^{n + i} D_{ij}}.
\end{equation*}
Notice the similarity to the $\Phi$ function. The Chern classes for
$\mathbb B_d$ on $\Mbar_{g, (\mathbf w, \varepsilon^{d})}/S_d$ can be
calculated by push-forward along the finite projection map and
dividing by the degree $d!$.

\subsection{Comparison}

The $\mathbb C^*$ actions on $\Mbar_{g, \mathbf w}(\mathbb P^1, d)$
and $\Qbar_{g, \mathbf w}(\mathbb P^1, d)$ are compatible with the
comparison map $c: \Mbar_{g, \mathbf w}(\mathbb P^1, d) \to \Qbar_{g,
  \mathbf w}(\mathbb P^1, d)$ and $c$ hence maps fixed loci to fixed
loci. The image of a locus corresponding to some combinatorial data is
given by contracting all degenerate vertices, adding the values of $d$
of the contracted vertices and edges to the degree of the image vertex
of the contraction, and replacing the point assignment $p$ by its
composition with the contraction.

The Gromov-Witten stable quotient comparison says that the
contribution of a stable quotient fixed locus to the virtual
fundamental class of $\Qbar_{g, \mathbf w}(\mathbb P^1, d)$ is the sum
of push-forwards of the contributions of all stable maps loci in the
preimage of $c$ to the virtual fundamental class of $\Mbar_{g, \mathbf
  w}(\mathbb P^1, d)$. This in particular implies that
\begin{equation*}
  c_*([\Qbar_{g, \mathbf w}(\mathbb P^1, d)]^{vir}) = [\Mbar_{g, \mathbf w}(\mathbb P^1, d)]^{vir}.
\end{equation*}

\section{The open locus}
\label{sec:open}

We will first restrict ourselves to the special case of $n$-fold
tensor powers $M_{g|n}$ of the universal curve $C_g$ over $M_g$. This
case occurs when the weights are sufficiently small (i.e. $\sum_{i =
  1}^n w_i < 1$) and we restrict ourselves to the locus corresponding
to smooth curves.

\subsection{Statement of the stable quotient relations}

Let us define the bracket operator $\{\}_\Delta$ on $\mathbb Q[\![t,
p_1, \dotsc, p_n]\!]$ in terms of the operators from the introduction by
\begin{align*}
  \{f(t)\}_\Delta =& -\{f(t)\}_\kappa \\
  \left\{f(t) \prod_{i \in S} p_i\right\}_\Delta =& \{f(t)\}_{D_S} \\
  \{p_i^e f(t, p_1, \dotsc, p_n)\}_\Delta =& \{p_i f(t, p_1, \dotsc, p_n)\}_\Delta,
\end{align*}
if $e > 0$. For example $\{t^2p_1^3\}_\Delta = t^2\psi_1^2$ and
$\{t^2p_1p_2\}_\Delta = -D_{12}\psi_1 = -D_{12}\psi_2$.

\begin{prop}
  \label{prop:open:sqrel}
  The relations given by
  \begin{equation*}
    \sum_{\zeta \in \{\pm 1\}} \zeta^{g - 1} \left[\exp\left(-\frac 12 \zeta p + \{\exp(-pD) \gamma(\zeta t, x)\}_\Delta\right)\right]_{t^rx^d\mathbf p^{\mathbf a}} = 0,
  \end{equation*}
  with the differential operator $D = tx\frac{\mathrm d}{\mathrm dx}$
  and $p = p_1 + \dots + p_n$ hold in $\M_{g | n}$ under the condition
  $g - 2d - 1 + |\mathbf a| < r$.
\end{prop}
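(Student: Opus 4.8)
The plan is to apply the virtual localization formula to an appropriate equivariant integral (or class push-forward) on $\Q_{g, \mathbf w}(\mathbb P^1, d)$ and extract the vanishing from a dimension/degree constraint. Since we are working over the open locus $\M_{g | n}$, which is $\nu^{-1}(\M_{g, \mathbf w})$, every fixed locus contributing will have a single non-degenerate vertex of genus $g$ carrying the smooth curve, while all other vertices are degenerate (genus $0$, one special point, trivial contribution $1$). Thus the graphs reduce drastically: the localization sum collapses to a sum over a single colored vertex $v$ with color $\zeta(v) = \zeta \in \{\pm 1\}$, together with edge/degenerate-vertex decorations that are pulled back from a point and merely bookkeep the degree $d$ and the marked-point data. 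First I would write down, for the class of interest on $\Q_{g, \mathbf w}(\mathbb P^1, d)$, the localization contribution using the non-degenerate vertex formula~\eqref{eq:sqvertex} with $d(v) = 0$ (since on the open locus the quotient sheaf $\mathcal S$ on the main component has the appropriate degree and the torsion sits away on the degenerate pieces), so that the vertex contribution involves the Chern classes of $\mathbb F_{d} = \mathbb E^* - \mathbb B_d - \mathbb C$.

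The second step is to translate the geometric vertex contribution into the generating-function language of the statement. The factor $(\zeta s)^{g - 1}$ emerging from $(\zeta(v) s)^{g(v) - d(v) - 1}$ with $g(v) = g$, $d(v) = 0$ accounts for the prefactor $\zeta^{g - 1}$ after we normalize powers of $s$. The Chern classes of $-\mathbb B_d$ are computed in Section~\ref{sec:loc:formula} by the product $\prod 1/(1 - \psi - \sum D_{ij})$, whose structure matches the $\Phi$ function; taking its logarithm and combining with the Hodge-bundle contribution $c(\mathbb E^*)$ — which produces the Bernoulli-number series — yields exactly the function $\gamma(\zeta t, x)$ inside the exponential. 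The operator $D = t x \frac{\mathrm d}{\mathrm dx}$ encodes the degree-shift bookkeeping when the $\varepsilon$-points are summed over $S_d$ and over all $d$, and the $\exp(-pD)$ together with the $\{\}_\Delta$ bracket is precisely the mechanism that converts the diagonal classes $D_{ij}$ appearing in $c(-\mathbb B_d)$ into the $\psi_i$ and $D_S$ insertions dictated by the bracket definition. The term $-\frac 12 \zeta p$ in the exponent should arise from the section-normalization of the weight-one markings against the edge contributions $\omega_e^v = \zeta s / d(e)$.

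Concretely, I would set the problem up as an equivariant class in $A^*_{\mathbb C^*}(\Q_{g, \mathbf w}(\mathbb P^1, d)) \otimes \mathbb Q[s, s^{-1}]$ whose non-equivariant limit (the $s^0$ part) is manifestly a genuine class on $\M_{g | n}$, yet which by localization is supported on fixed loci. The vanishing then follows because, after localization, the only surviving terms are those we have identified, and a comparison of cohomological degree against the available dimension of $\M_{g | n}$ forces the relevant coefficient to vanish whenever $g - 2d - 1 + |\mathbf a| < r$: this is the inequality guaranteeing that the class we extract lives in a degree where the localization-produced class is forced to be zero (there is simply no nonzero equivariant correction of the required $s$-degree). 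I would package all degrees by reading off the coefficient of $t^r x^d \mathbf p^{\mathbf a}$, matching $x^d$ to the total degree $d$, $\mathbf p^{\mathbf a}$ to the marked-point insertions, and $t^r$ to the Chow codimension.

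The main obstacle I expect is the bookkeeping in the second step: verifying that the raw output of~\eqref{eq:sqvertex}, after summing the $\mathbb B_d$ contributions over all $d$ and dividing by the $S_d$-symmetrization $d!$, reorganizes exactly into $\exp(\{\exp(-pD)\gamma(\zeta t, x)\}_\Delta)$ with the precise bracket operator $\{\}_\Delta$ as defined — in particular tracking how the nested diagonals $\sum_{j} D_{ij}$ in $c(-\mathbb B_d)$ become the signed $D_S$ insertions under $\{\}_\Delta$, and confirming the exact form of the $-\frac12 \zeta p$ correction. This is the combinatorial heart of the matter and is where the series $\Phi$ and $\gamma$ earn their definitions; the geometry (which fixed loci appear, why the degenerate vertices contribute trivially) is comparatively straightforward on the open locus, which is exactly why Section~\ref{sec:open} treats this case first.
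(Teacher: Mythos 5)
Your overall strategy coincides with the paper's: localize the push-forward of powers of the $s_i$-classes from the stable quotient space $\Q_{g|n}(\mathbb P^1, d)$ to $\M_{g|n}$ and extract relations from the $s$-degree. However, your geometric picture of the fixed loci is wrong, and this is where the proof actually lives. Over the open locus with $\sum_i w_i < 1$, stable quotient stability forbids any rational tails (a genus-$0$ component with one node would need total marking weight at least $1$ to satisfy the ampleness condition for all $\varepsilon > 0$), so each fixed locus consists of a single vertex with no edges and no degenerate vertices: the smooth curve itself, with the torsion of the quotient supported at $d$ extra $\varepsilon$-points. There are exactly two fixed loci, both isomorphic to $\M_{g|n+d}/S_d$, and the unique vertex carries the full degree, $d(v) = d$. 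Your claim that $d(v) = 0$ ``since the torsion sits away on the degenerate pieces'' contradicts your own use of $\mathbb F_d = \mathbb E^* - \mathbb B_d - \mathbb C$: with $d(v) = 0$, formula \eqref{eq:sqvertex} would produce no $\mathbb B_d$ at all. You have imported the stable-maps fixed-locus picture (rational tails and multiple-cover edges), which is precisely what passing to stable quotients is designed to avoid. Relatedly, the prefactor $\zeta^{g-1}$ is not read off directly from $(\zeta s)^{g(v) - d(v) - 1}$: the vertex gives $(\zeta s)^{g - d - 1}$, and the remaining factor $\zeta^d$ is only absorbed later through the normalization of the arguments of $\Phi$.

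There are two further gaps. First, the term $-\frac 12 \zeta p$ does not arise from ``weight-one markings against edge contributions $\omega_e^v$'' --- in this setting there are no weight-one markings and no edges. It comes from the deliberate choice of equivariant lift $\tilde s_i = -s_i - \frac 12 s$, which symmetrizes the weights $0$ and $1$ with which the torus acts on the two fixed subsheaves of $\mathbb C^2$. Second, the mechanism converting the geometry into the bracket formalism is not carried by $c(-\mathbb B_d)$ alone: the crucial identity is that on the fixed locus $s_i = \sum_{j = n + 1}^{n + d} D_{ij}$, so the integrand couples the $n$ markings to the torsion points; it is the push-forward formulas for $\varepsilon_* D_{S, a}$ (producing $\kappa$ classes and lower diagonals), the decomposition of monomials into connected components, and the exponential formula that jointly generate $\gamma$, the bracket $\{\}_\Delta$ and the operator $\exp(pD)$. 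Finally, the vanishing is not a comparison of ``cohomological degree against the available dimension of $\M_{g|n}$'': it is polynomiality in $s$. The push-forward $\nu_*^{vir}(\tilde{\mathbf s}^{\mathbf a})$ is an honest equivariant class, hence a polynomial in $s$, while localization expresses it as a sum of rational functions; therefore the coefficient of $s^c$ must vanish for every $c < 0$, and $c = g - 1 - 2d + |\mathbf a| - r < 0$ is exactly the stated condition on $r$. Your parenthetical remark gestures at this, but as written a dimension count would not yield the inequality.
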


\subsection{Proof of the stable quotient relations}
\label{sec:open:proof}

We generalize here the localization method of \cite{PaPi13P}.

For each $i \in \{1, \dots, n\}$ we can define a class $s_i \in
A^1(\M_{g | n}(\mathbb P^1, d))$ as the pull-back of $c_1(\mathcal S)$
from the universal curve $\C_{g | n}(\mathbb P^1, d)$ via the $i$-th
section.

For given nonnegative integers $a_i$ let us look at the class
\begin{equation*}
  \mathbf s^{\mathbf a} = \prod_{i = 1}^n (-s_i)^{a_i} \in A^{|\mathbf a|}(\M_{g | n}(\mathbb P^1, d)).
\end{equation*}

The strategy is to study the $\mathbb C^*$ action from
Section~\ref{sec:loc}, to lift $\mathbf s^{\mathbf a}$ to equivariant
cohomology and to write down the localization formula calculating the
push-forward of this class to the equivariant cohomology of $\M_{g |
  n}$. As we have seen the general form of the localization formula is
a sum of contributions from the fixed loci. For each fixed locus a
class from its equivariant Chow ring \emph{localized by the
  localization variable} $s$ pushed forward via the inclusion of the
fixed locus. We get the relations from the fact that the rational
functions in the localization variable we obtain must actually be
polynomials in $s$ after summing over all fixed loci.

\begin{rem}
  In \cite{MOP11} and \cite{PaPi13P} the same strategy was pursued
  but other related classes were chosen to be pushed forward. In
  similar spirit we could add factors of the form $\pi_*(s_{n +
    1}c_1(\omega_\pi)^b)$, where $\pi: \Q_{g | n + 1}(\mathbb P^1, d)
  \to \Q_{g | n}(\mathbb P^1, d)$ is the forgetful map and
  $\omega_\pi$ is the relative dualising sheaf, to the class we are
  pushing forward. However because of the commutative diagram
  \begin{equation*}
    \begin{tikzpicture}[node distance=1.5cm, auto]
      \node (a) {$\Q_{g | n + 1}(\mathbb P^1, d)$};
      \node (b) [below of=a] {$\M_{g | n + 1}$};
      \node (c) [right of=a,node distance=3cm] {$\Q_{g | n}(\mathbb P^1, d)$};
      \node (d) [below of=c] {$\M_{g | n}$};
      \draw[->] (a) to node {$\nu$} (b);
      \draw[->] (a) to node {$\pi$} (c);
      \draw[->] (c) to node {$\nu$} (d);
      \draw[->] (b) to node {$\pi$} (d);
    \end{tikzpicture}
  \end{equation*}
  and the fact that $c_1(\omega_\pi) = \nu^*(\psi_{n + 1})$ these will
  be contained in the completed set of stable quotient
  relations.\footnote{To see this for more than one factor (say $m$
    factors) one needs to first interpret the product as a class on
    the $m$-fold tensor power $X$ of $\Q_{g | n + 1}(\mathbb P^1, d)$
    over $\Q_{g | n}(\mathbb P^1, d)$ and use the birational map from
    $\Q_{g | n + m}(\mathbb P^1, d)$ to $X$.}

  In \cite{PaPi13P} only factors with $a = 1$ were used to derive the
  FZ relations on $\M_g$. The results from
  Section~\ref{sec:open:diffalg} imply that allowing higher values for
  $a$ would not have led to more stable quotient relations.

\end{rem}

The action of $\mathbb C^*$ on $\mathbb P^1$ is induced by the action
of $\mathbb C^*$ on $\mathbb C^2$ given by $([z_0, z_1], \lambda)
\mapsto [z_0, \lambda z_1]$. This naturally induces $\mathbb C^*$
actions not only on $Q_{g | n}(\mathbb P^1, d)$ but also on the
universal curve $\C_{g | n}(\mathbb P^1, d)$ and the universal sheaf
$\mathcal S$. This gives a natural lift of the $s_i$ to equivariant
cohomology and therefore also a natural lift of $\mathbf s^{\mathbf
  a}$. We will not choose this lift but instead
\begin{equation*}
  \tilde {\mathbf s}^{\mathbf a} = \prod_{i = 1}^n \left(-s_i - \frac 12 s\right)^{a_i} \in A_{\mathbb C^*}^{|\mathbf a|}(\M_{g | n}(\mathbb P^1, d)),
\end{equation*}
where the $s_i$ are the natural lifts.

Let us consider the localization formula for this equivariant lift
applied to the push-forward
\begin{equation*}
  \nu_*(\mathbf s^{\mathbf a} \cap [\M_{g | n}(\mathbb P^1, d)]^{vir}) \in A_{2g + 2d - 2 + n - |\mathbf a|}(\M_{g | n}).
\end{equation*}
Let us shorten this by writing $\nu_*^{vir}(\mathbf s^{\mathbf a})$
for this push-forward after capping with the virtual fundamental
class.

Because we assume the strict inequality $\sum_{i = 1}^n w_i < 1$ there
are only two fixed loci with respect to the torus actions in the
description of Section~\ref{sec:loc:fixed}. Concretely in this special
case they correspond to elements $(C, p_1, \dotsc, p_n)$ of $\M_{g |
  n}$ with quotient sequence
\begin{equation*}
  0 \to \mathcal O_C(-p_{n + 1} - \dotsb - p_{n + d}) \to \mathcal O_C^2 \to Q \to 0
\end{equation*}
where the first map factors through $\mathcal O_C$ and the map
$\mathcal O_C \to \mathcal O_C^2$ is induced from one of the two torus
invariant inclusions of $\mathbb C$ as a coordinate axis in $\mathbb
C^2$. Here $\mathcal O_C(-p_{n + 1} - \dotsb - p_{n + d})$ is an ideal
sheaf of $\mathcal O_C$ of degree $d$. Both fixed point loci can be
identified with $\M_{g | n + d}/S_d$ where the symmetric group $S_d$
permutes the last $d$ markings.

Since the graphs corresponding to the fixed point loci have each only
one vertex and no edge we can calculate the inverse of the equivariant
Euler class of the normal bundle to each fixed locus using
\eqref{eq:sqvertex}. It is given by
\begin{equation*}
  (\zeta s)^{g - d - 1} \sum_{j = 0}^\infty \frac{c_i(\mathbb F_d)}{(\zeta s)^i},
\end{equation*}
where $\zeta$ is $+1$ and $-1$ for $0$ and $\infty$ respectively.

Applying the fixed point formula we obtain for the $s^c$ part of the
push-forward 
\begin{equation}
  \label{eq:sqrel1}
  [\nu_*^{vir}(\tilde{\mathbf s}^{\mathbf
    a})]_{s^c} = \frac 1{d!} \sum_\zeta \varepsilon_* \left[\prod_{i = 1}^n \left(-s_it - \frac 12\zeta\right)^{a_i} \sum_{j = 0}^\infty t^j \zeta^{g - d - 1 - j} c_j(\mathbb F_d)\right]_{t^{g - 1 - d - c + |\mathbf a|}},
\end{equation}
where $\varepsilon: \M_{g | n + d} \to \M_{g | n}$ forgets the last
$d$ markings. We have here by abuse of notation denoted similarly
defined classes on $\M_{g | n + d}$ with the same name as on $\M_{g |
  n}(\mathbb P^1, d)$. The expression $(-s_it - \frac 12\zeta)$ comes
from the fact that the torus acts trivially on the subspace of
$\mathbb C^2$ given by $0$ and with weight $1$ on the subspace given
by $\infty$. The equivariant lift of the $s_i$ was chosen in order to
have this symmetric expression.

Since the push-forward must be an honest equivariant class, the classes
\eqref{eq:sqrel1} must be zero if $c < 0$.

Let us package these relations into a power series. We have
\begin{equation}
  \label{eq:sqrel2}
  \prod_{i = 1}^n \frac 1{a_i!} [\nu_*^{vir}(\tilde{\mathbf s}^{\mathbf
    a})]_{s^c} = \sum_\zeta \zeta^{g - d - 1} \varepsilon_* [\exp(-T_1) T_2]_{t^rx^dp^{\mathbf a}},
\end{equation}
with
\begin{align*}
  T_1 =& \sum_{i = 1}^n \left(s_it + \frac 12\zeta\right) p_i,\\
  T_2 =& \sum_{j = 0}^\infty \sum_{d = 0}^\infty (\zeta t)^j c_j(\mathbb F_d) \frac{x^d}{d!}.
\end{align*}
Since
\begin{equation*}
  s_i = \sum_{j = n + 1}^{n + d} D_{ij}
\end{equation*}
we can rewrite $T_1$ as
\begin{equation*}
  T_1 = \sum_{i = 1}^n \sum_{j = n + 1}^{n + d} D_{ij} p_i + \frac 12 \zeta \sum_{i = 1}^n p_i.
\end{equation*}

\subsubsection{Relations between diagonal classes}
\label{sec:open:proof:universal}

In order to better understand $c(\mathbb F_d)$ let us here collect the
universal relations between classes in $A^*(\M_{g | n})$.

The basic relations are
\begin{align*}
  D_{ij} \psi_i = D_{ij} \psi_j = -D_{ij}^2 \\
  D_{ij} D_{ik} = D_{ijk}
\end{align*}
for pairwise different $i$, $j$, $k$ (compare to \cite{Fa99}). Let
$D_{S, a} \in A^a(\M_{g | n})$ by defined for any
$S \subset \{1, \dotsc, n\}$ and $a \ge |S| - 1$ by
\begin{equation*}
  D_{S, a} =
  \begin{cases}
    \psi_i^a, & \text{if } S = \{i\}, \\
    (-1)^{|S| - 1} D_S \psi_i^{a - |S| + 1}, & \text{else, where }i \in \{1, \dotsc, d\}.
  \end{cases}
\end{equation*}
Then $D_{S, a} D_{T, b} = D_{S \cup T, a + b}$ if $S \cap T \neq
\emptyset$.

\begin{lem}
  Each monomial $M$ in diagonal and cotangent line classes in
  $A^*(\M_{g | n})$ can be written as a product
  \begin{equation*}
    M = \pm\prod_{S \in P} D_{S, a(S) + |S| - 1}
  \end{equation*}
  for some partition $P$ of $\{1, \dotsc, d\}$, function $a: P \to
  \mathbb Z_{\ge 0}$ and a suitable choice of sign. This product
  decomposition is unique if we only use the above relations between
  diagonal and cotangent line classes.
\end{lem}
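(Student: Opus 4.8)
The plan is to read the statement as the existence and uniqueness of a normal form in the commutative algebra generated by the $\psi_i$ and the diagonal classes $D_S$ subject only to the displayed relations, and to establish it by exhibiting a terminating, confluent merging procedure together with a separating combinatorial invariant.

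For existence I would first rewrite each generator as a single bracket symbol: one has $\psi_i=D_{\{i\},1}$, and evaluating the definition at $a=|S|-1$ gives $D_S=(-1)^{|S|-1}D_{S,|S|-1}$. Thus any monomial $M$ equals, up to an explicit overall sign, a product of factors $D_{S_j,a_j}$ with $a_j\ge|S_j|-1$. I would then repeatedly apply the composition rule $D_{S,a}D_{T,b}=D_{S\cup T,a+b}$ to any two factors whose index sets meet. This terminates because the number of factors strictly decreases, and the surviving factors have pairwise disjoint index sets. The block of the resulting partition $P$ containing an index is precisely its connected component in the hypergraph on $\{1,\dots,d\}$ whose hyperedges are the $S_j$, and the exponent attached to a block is the sum of the $a_j$ of the factors absorbed into it; padding with trivial blocks $D_{\{i\},0}=1$ makes $P$ a partition of all of $\{1,\dots,d\}$.

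The one substantive point in existence is that the exponent of a block $S$ is at least $|S|-1$, so that the function $a$ takes values in $\mathbb Z_{\ge0}$. This follows from $a_j\ge|S_j|-1$ together with $\sum_j|S_j|\ge|S|+k-1$, where $k$ is the number of hyperedges making up the component $S$; the latter inequality is just the fact that the bipartite incidence graph between these hyperedges and the vertices of $S$ is connected and hence has at least $(k+|S|)-1$ edges, their number being exactly $\sum_j|S_j|$.

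Uniqueness is the crux, and I would prove it by showing that the relations cannot collapse two distinct normal forms. To this end I attach to every word $M$ in the generators the invariant $\Theta(M)=(\epsilon,P,a)$ output by the merging above, with $\epsilon$ the product of the generator signs; since connected components and summed exponents are independent of the merge order, $\Theta$ is a well-defined function of the multiset of generators. The key verification is that $\Theta$ is preserved by each displayed relation: both sides of $D_{ij}\psi_i=-D_{ij}^2$ carry index set $\{i,j\}$ and merge to $-D_{\{i,j\},2}$, while both sides of $D_{ij}D_{ik}=D_{ijk}$ carry index set $\{i,j,k\}$ and merge to $D_{\{i,j,k\},2}$, and associativity of merging propagates this through multiplication by any further monomial. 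Hence $\Theta$ descends to the quotient by the relations. Because the relations are sign-monomial identities, the quotient is spanned by the normal forms, and $\Theta$ separates them — one has $\Theta(\prod_{S\in P}D_{S,a(S)+|S|-1})=(+1,P,a)$, which is injective in $(P,a)$ — so distinct normal forms stay distinct and none is forced to vanish. I expect this relation-invariance of $\Theta$, and in particular the consistency of the sign $\epsilon$ that rules out a spurious identity $M=-M$, to be the main obstacle; the existence half is the comparatively routine bookkeeping of the merging algorithm.
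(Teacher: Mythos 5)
Your proof is correct, but there is essentially nothing in the paper to compare it against: the lemma is stated without any proof, immediately after the composition rule $D_{S,a}D_{T,b}=D_{S\cup T,a+b}$ for $S\cap T\neq\emptyset$, which the author evidently regards as making the statement self-evident. Your existence argument --- rewriting each generator as a signed symbol via $\psi_i=D_{\{i\},1}$ and $D_S=(-1)^{|S|-1}D_{S,|S|-1}$ and then merging overlapping factors --- is exactly the argument the paper leaves implicit, and your connectivity count $\sum_j|S_j|\ge|S|+k-1$ correctly disposes of the one point that genuinely needs verification, namely that the merged exponent of a block $S$ is at least $|S|-1$, so that $a(S)\in\mathbb Z_{\ge 0}$. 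The uniqueness half is additional content the paper never addresses: your invariant $\Theta=(\epsilon,P,a)$, well defined because connected components and exponent sums do not depend on the merge order, and checked against both defining relations ($D_{ij}\psi_i$, $D_{ij}\psi_j$ and $-D_{ij}^2$ all reduce to $-D_{\{i,j\},2}$, while $D_{ij}D_{ik}$ and $D_{ijk}$ both reduce to $D_{\{i,j,k\},2}$), is the right separating device, and it does show that the normal forms stay linearly independent modulo the ideal generated by the relations. You are also right to track the sign: a bare monomial such as $D_{12}$ equals $-D_{\{1,2\},1}$, so the displayed decomposition holds only up to a determined sign (equivalently, ``monomial'' must be read as ``signed monomial''); this is a genuine imprecision in the lemma as stated, which your $\epsilon$ repairs, and your computation that $\Theta$ of a normal form always has $\epsilon=+1$ (the external sign of each factor $D_{S,a(S)+|S|-1}$ cancels the internal sign of its $D_S$) is precisely what rules out a spurious identity $M=-M$. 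In short: correct, more careful than the source, and the only proof of record for this statement.
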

\begin{rem}
  If the partition $P$ is the one element set partition, we say that
  $M$ is connected. We call the factors of the decomposition (or just
  the elements of $P$) the connected components of $M$.
\end{rem}

The push-forward under the forgetful map $\pi: \M_{g | n} \to \M_{g |
  n - 1}$ is given by
\begin{equation*}
  \pi_* D_{S, a} =
  \begin{cases}
    0, & \text{if } n \notin S, \\
    \kappa_{a - 1}, & \text{if } |S \cap \{n\}| = 1, \\
    -D_{S \setminus \{n\}, a - 1}, & \text{else.}
  \end{cases}
\end{equation*}
Here and in the rest of this article $\kappa_{-1}$ is defined to be
zero.

\subsubsection{Simplest relations}
\label{open:proof:simple}

Let us first consider the stable quotient relations in the case of
$\mathbf a = 0$. Then they are simply
\begin{equation*}
  0 = \sum_\zeta \zeta^{g - d - 1} \varepsilon_* \left[\sum_{d = 0}^\infty \sum_{j = 0}^\infty (\zeta t)^j c_j(\mathbb F_d) \frac{x^d}{d!}\right]_{t^rx^d}
\end{equation*}
for
\begin{equation}
  \label{eq:sizecondasimple}
  r > g - 1 - 2d.
\end{equation}

By the definition of $\mathbb F_d$ as a K-theoretic difference of
$\mathbb E^*$, a trivial rank $1$ bundle and $\mathbb B_d$ the inner
sum breaks into two factors. The part corresponding to $\mathbb E^*$
is pulled back from $M_{g | n}$ and does not depend on $d$. Using
Mumford's formula \cite{Mu83} we can therefore rewrite the relations
as: The class
\begin{equation*}
  \sum_\zeta \zeta^{g - d - 1} \left[\exp\Big(-\sum_{i \ge 1} \frac{B_{2i}}{2i(2i - 1)} \kappa_{2i - 1} (t\zeta)^{2i - 1}\Big) \varepsilon_* \sum_{d = 0}^\infty \sum_{j = 0}^\infty (\zeta t)^j c_j(-\mathbb B_d) \frac{x^d}{d!}\right]_{t^rx^d}
\end{equation*}
is zero if \eqref{eq:sizecondasimple} holds.

To deal with the second factor we formally expand it as a power series
in $x$, $t$ and the classes $D_{S, a}$ for various $S \subset \{n + 1,
\dotsc, n + d\}$ and $a \in \mathbb Z_{\ge 0}$. By the exponential
formula and the facts from Section \ref{sec:open:proof:universal} we
have that
\begin{equation*}
  \varepsilon_*\sum_{d = 0}^\infty \sum_{j = 0}^\infty (\zeta t)^j c_j(-\mathbb B_d) \frac{x^d}{d!} = \exp\left(\varepsilon_*\sum_{d = 1}^\infty \sum_{r = 0}^\infty S_d^r D_{\{n + 1, \dotsc, n + d\}, r} (\zeta t)^r \frac{x^d}{d!}\right)
\end{equation*}
where
\begin{equation*}
  \log\left(\sum_{d = 0}^\infty \prod_{i = 1}^d \frac 1{1 - it} \frac{x^d}{d!}\right) = \sum_{d = 1}^\infty \sum_{r = 0}^\infty S_d^r t^r \frac{x^d}{d!}.
\end{equation*}

With $\varepsilon_* D_{\{n + 1, \dotsc, n + d\}, r} = (-1)^{d - 1}
\kappa_{r - d}$ and noticing the similarities between the series defining
$S_d^r$ and $\log(\Phi)$ we obtain
\begin{equation*}
  \varepsilon_* \sum_{d = 0}^\infty \zeta^d \sum_{j = 0}^\infty \frac{(\zeta t)^j}{t^d} c_j(-\mathbb B_d) \frac{x^d}{d!} = \exp\left(-\{\log(\Phi(\zeta t))\}_\kappa\right)
\end{equation*}
and so the stable quotient relations in the case $\mathbf a = 0$ are
\begin{equation*}
  \left[\sum_\zeta \zeta^{g - 1} \exp\left(-\{\gamma(\zeta t)\}_\kappa\right)\right]_{t^rx^d} = 0
\end{equation*}
under Condition~\eqref{eq:sizecondasimple}.

\subsubsection{General relations}

We will investigate how monomials in the $s_i$ affect the push-forward
under $\varepsilon$ of monomials supported only on the last $d$
points.

Notice that for each partition $P$ of $\{n + 1, \dotsc, n + d\}$ we have
\begin{equation}
  \label{eq:split}
  \exp\left(-\sum_{i = 1}^n s_i\right) = \prod_{S \in P} \exp\left(-\sum_{i = 1}^n \sum_{j \in S} D_{ij}\right),
\end{equation}
and each factor is pulled back via the map forgetting all points in
$\{n + 1, \dotsc, n + d\}$ not in $S$.

Moreover notice that if $M$ is a connected monomial supported in the last $d$
marked points with $\varepsilon_* M = [-\{f(t)\}_\kappa]_{t^r} =
[\{f(t)\}_{\Delta}]_{t^r}$, we have
\begin{equation}
  \label{eq:conn}
  [\varepsilon_*((-s_ip_i)^e x^d M)]_{x^d} = [\varepsilon_*((-dD_{i, n + 1}p_i)^e x^d M)]_{x^d} = [\{(p_iD)^e(x^df(t))\}_{\Delta}]_{t^rx^d}.
\end{equation}

From \eqref{eq:split}, \eqref{eq:conn} and the identity
\begin{equation*}
  \exp(pD)\exp(f(t, x)) = \exp(\exp(pD)f(t, x))
\end{equation*}
we obtain the general stable quotient relations
\begin{equation*}
  \sum_\zeta \zeta^{g - 1} \left[\exp\left(-\frac 12 \zeta p + \{\exp(pD) \gamma(\zeta t, x)\}_\Delta\right)\right]_{t^rx^d\mathbf p^{\mathbf a}} = 0,
\end{equation*}
if
\begin{equation}
  \label{eq:sizeconda}
  r > g - 1 - 2d + |\mathbf a|.
\end{equation}

\subsection{Evaluation of the relations}

\subsubsection{Minor simplification}

Notice that in the relations of Proposition~\ref{prop:open:sqrel} the
summands in the $\zeta$ sum are equal up to a sign
\begin{equation*}
  (-1)^{g - 1 + r + |\mathbf a|}.
\end{equation*}
Therefore the relations are in fact zero if $g + r + |\mathbf a|
\equiv 0 \pmod{2}$, and in the case
\begin{equation}
  \label{eq:modcond}
  g - 1 + r + |\mathbf a| \equiv 0 \pmod{2}
\end{equation}
we can reformulate them to
\begin{equation*}
  \left[\exp\left(-\frac 12 p + \{\exp(pD) \gamma(t, x)\}_\Delta\right)\right]_{t^rx^d\mathbf p^{\mathbf a}} = 0.
\end{equation*}

\subsubsection{Differential algebra}
\label{sec:open:diffalg}

In this section we will establish that it is enough to consider the
stable quotient relations in the case that $a_i < 2$ for all
$i \in \{1, \dotsc, m\}$.

The series $\delta = D\gamma - \frac 12$ satisfies the differential
equation
\begin{equation*}
  D \delta = -\delta^2 + x + \frac 14,
\end{equation*}
as can be derived from the differential equation
\begin{equation*}
  D(\Phi - D\Phi) = -x\Phi,
\end{equation*}
which is satisfied by $\Phi$ as seen by looking at its series
definition.

Reformulating the relations with $\delta$ gives
\begin{equation*}
  \left[\exp\left(-\{\gamma\}_\kappa + \sum_{i = 1}^\infty \frac 1{i!}\{p^iD^{i - 1} \delta\}_{\Delta}\right)\right]_{t^rx^d\mathbf p^{\mathbf a}} = 0
\end{equation*}
if \eqref{eq:sizeconda} and \eqref{eq:modcond} hold.

Let us consider
\begin{equation*}
  G = \frac{\frac{\partial^2}{\partial p_j^2} F}F,
\end{equation*}
with
\begin{equation*}
  F = \exp\left(-\{\gamma\}_\kappa + \sum_{i = 1}^\infty \frac 1{i!}\{p^iD^{i - 1} \delta\}_{\Delta}\right).
\end{equation*}
We have that
\begin{multline*}
  G = \frac{\partial^2}{\partial p_j^2} \sum_{i = 1}^\infty \frac 1{i!}\{p^iD^{i - 1} \delta\}_{\Delta} + \left(\frac{\partial}{\partial p_j} \sum_{i = 1}^\infty \frac 1{i!}\{p^iD^{i - 1} \delta\}_{\Delta}\right)^2 \\
  = \{\exp(pD) D\delta\}_{\Delta_j} + \{\exp(pD)\delta\}_{\Delta_j}^2,
\end{multline*}
where the operator $\{\}_{\Delta_j}$ is defined by $\{f\}_{\Delta_j} =
p_j^{-1}\{p_jf\}_\Delta$. The bracket $\{\}_{\Delta_j}$ and squaring
commute because, informally said, $\{\}_{\Delta_j}$ connects any term
to $j$. So we obtain
\begin{align*}
  G =& \{\exp(pD)D \delta + \left(\exp(pD) \delta\right)^2\}_{\Delta_j} = \{\exp(pD)(D \delta + \delta^2)\}_{\Delta_j}\\
  =& \left\{\exp(pD)\left(x + \frac 14\right)\right\}_{\Delta_j} = \frac 14 + x\{\exp(pt)\}_{\Delta_j},
\end{align*}
Therefore we have that
\begin{equation*}
  \frac{\partial^2}{\partial p_j^2} F = \left(\frac 14 + x\{\exp(pt)\}_{\Delta_j}\right)F.
\end{equation*}
and can express the relations for $a_j \ge 2$ in terms of lower
relations multiplied by monomials in cotangent line and diagonal
classes.

\subsubsection{Substitution}
\label{sec:open:subst}

The differential equation satisfied by $-t\gamma$ has been studied by
Ionel in \cite{Io05}. In \cite{PaPi13P} her results were extended
to give formulas for $D^i \gamma$. We will collect some of their
results on $\gamma$ and its derivatives here.

With the new variables
\begin{equation*}
  u = \frac t{\sqrt{1 + 4x}}, \qquad y = \frac{-x}{1 + 4x}
\end{equation*}
one can write
\begin{equation*}
  \gamma = \frac 1t (t\gamma)(0, x) + \frac 14 \log(1 + 4y) + \sum_{k = 1}^\infty \sum_{j = 0}^k c_{k, j} u^k y^j,
\end{equation*}
for some coefficients $c_{k, j}$, which are defined to vanish outside
the summation region used above. Furthermore, we have for the
derivatives of $\delta$
\begin{equation*}
  D^{i - 1} \delta = (1 + 4y)^{-\frac i2}\left(\sum_{j = 0}^{i - 1} b_j^i u^{i - 1}y^j - \sum_{k = 0}^\infty \sum_{j = 0}^{k + i} c_{k, j}^i u^{k + i}y^j\right) =: (1 + 4y)^{-\frac i2}\delta_i,
\end{equation*}
for some coefficients $b_j^i$, $c_{k, j}^i$.

We will also need a result by Ionel relating coefficients of a power
series $F$ in $x$ and $t$ before and after the variable substitution:
\begin{equation*}
  [F]_{t^rx^d} = (-1)^d [(1 + 4y)^{\frac{r + 2d - 2}2} F]_{u^ry^d}
\end{equation*}

\smallskip

Let us now apply these formulas to the relations. Using the fact that
$\kappa_{-1} = 0$ we get
\begin{equation*}
  \left[(1 + 4y)^e\exp\left(-\left\{\sum_{k = 1}^\infty \sum_{j = 0}^k c_{k, j} u^k y^j\right\}_\kappa + \sum_{i = 1}^\infty \frac 1{i!}\{p^i\delta_i\}_{\Delta}\right)\right]_{u^ry^d\mathbf p^{\mathbf a}} = 0,
\end{equation*}
under conditions \eqref{eq:sizeconda} and \eqref{eq:modcond}, where
the exponent $e$ is
\begin{equation*}
  e = \frac{r + 2d - 2}2 - \frac{\kappa_0}4 - \frac{|\mathbf a|}2 = \frac{r + 2d - 1 - g - |\mathbf a|}2.
\end{equation*}

\subsubsection{Extremal coefficients}
\label{sec:open:ext}

It is noticeable that in the series
\begin{equation*}
  \sum_{k = 1}^\infty \sum_{j = 0}^k c_{k, j} u^k y^j, \quad \sum_{j = 0}^{i - 1} b_j^i u^{i - 1}y^j - \sum_{k = 0}^\infty \sum_{j = 0}^{k + i} c_{k, j}^i u^{k + i}y^j
\end{equation*}
appearing in the formulas for $\gamma$ and $D^{i -1}\delta$ the
$y$-degree is bounded from above by the $u$-degree. We will only be
interested in the extremal coefficients. Here the $A$ and $C_i$ come
into the picture.

In \cite{Io05} it has been proven that
\begin{equation*}
  \log(A(t)) = \sum_{k = 1}^\infty c_{k, k} t^k
\end{equation*}
and in \cite{PaPi13P} it is shown that
\begin{equation*}
  2^{-i} C_i(t) = b_{i - 1}^i t^{i - 1} - \sum_{k = 0}^\infty c_{k, k + i}^i t^{k + i}.
\end{equation*}

We see that in the exponential factor of the relations for each
summand the $y$-degree is bounded from above by the
$u$-degree. Furthermore the exponent $e$ of the $(1 + 4y)$-factor is
integral and positive by \eqref{eq:modcond} and
\eqref{eq:sizeconda}. This implies that the relation is zero unless
\begin{equation}
  \label{eq:sizecondb}
  3r \ge g + 1 + |\mathbf a|
\end{equation}
holds. With the following lemma we can extract the extremal part of
the relations.

\begin{lem}
  Fix any $\mathbb Q$ algebra $A$, any $F \in A[y]$ and any $c
  \in \mathbb Z_{\ge 0}$. The relations
  \begin{equation*}
    [(1 + 4y)^d F]_{y^d} = 0
  \end{equation*}
  for all $d > c$ imply $F = 0$.
\end{lem}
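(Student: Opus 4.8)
The plan is to reduce the statement to a question about the coefficients of $F$ and then kill those coefficients by a finite-difference (equivalently Vandermonde) argument. Write $F = \sum_{k = 0}^N f_k y^k$ with $f_k \in A$ and $N = \deg F$. First I would compute the relevant coefficient explicitly: expanding $(1 + 4y)^d = \sum_i \binom{d}{i} 4^i y^i$ by the binomial theorem and extracting the $y^d$ coefficient of the product gives
\begin{equation*}
  [(1 + 4y)^d F]_{y^d} = \sum_{k = 0}^N \binom{d}{k} 4^{d - k} f_k.
\end{equation*}
Since $A$ is a $\mathbb Q$-algebra, $4$ is invertible; setting $g_k = 4^{-k} f_k \in A$, the hypothesis becomes $\sum_{k = 0}^N \binom{d}{k} g_k = 0$ for every $d > c$, and $F = 0$ is equivalent to $g_k = 0$ for all $k$.

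Next I would exploit that $\sum_k \binom{d}{k} g_k$ is a Newton series in $d$ and isolate its coefficients using the forward-difference operator $(\Delta h)(d) = h(d + 1) - h(d)$. By Pascal's rule $\binom{d + 1}{k} - \binom{d}{k} = \binom{d}{k - 1}$, so $\Delta$ sends $\binom{\cdot}{k}$ to $\binom{\cdot}{k - 1}$; hence applying $\Delta^N$ to the vanishing sum leaves only the top term and yields $g_N = 0$. Crucially, $\Delta^N$ evaluated at $d = c + 1$ only involves the values at $c + 1, \dotsc, c + 1 + N$, all of which exceed $c$, so the hypothesis applies. Downward induction on $N$ then gives $g_{N - 1} = \dotsb = g_0 = 0$, whence $F = 0$. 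Equivalently, one could choose $N + 1$ distinct integers $d > c$ and observe that the matrix $\bigl(\binom{d_j}{k}\bigr)_{j, k}$ is invertible over $\mathbb Q$, its determinant being $\prod_k (1/k!)$ times a Vandermonde determinant in the $d_j$.

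The step requiring care is the passage from the infinitely many scalar relations to the vanishing of the $g_k$: because $A$ is an arbitrary $\mathbb Q$-algebra and need not be an integral domain, the naive ``a polynomial with more roots than its degree is zero'' argument is unavailable. The point is that the relations are evaluated at integers, which lie in $\mathbb Q \subseteq A$, and both the finite-difference operator and the inverse of the binomial transition matrix are defined over $\mathbb Q$; working coefficient-by-coefficient in the binomial basis therefore isolates each $g_k$ using only rational (in fact integral, for $\Delta$) operations, with no appeal to cancellation in $A$. This is what makes the argument valid over a general $\mathbb Q$-algebra rather than merely over a field.
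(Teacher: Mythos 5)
Your proof is correct and takes essentially the same route as the paper: the paper rewrites the hypothesis as $\left[\left(\tfrac 1y + 4\right)^d F\right]_{y^0} = 0$ and then simply invokes polynomiality of $F$ ``and linear algebra,'' which is exactly the $\mathbb Q$-linear system on the coefficients that you set up and solve explicitly via finite differences (or the Vandermonde-type determinant). Your write-up supplies the details --- including the valid observation that only invertibility over $\mathbb Q$, not any cancellation in $A$, is needed --- that the paper leaves implicit.
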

\begin{proof}
  The relations can be rewritten to
  \begin{equation*}
    \left[\left(\frac 1y + 4\right)^d F\right]_{y^0} = 0.
  \end{equation*}
  The lemma follows by the fact that $F$ is a polynomial in $y$, and
  linear algebra.
\end{proof}

Using the lemma and the formulas for the extremal coefficients we
obtain that the relations
\begin{equation*}
  \left[\exp(-\{\log(A)\}_\kappa) \exp\left(\sum_{i = 1}^\infty \frac 1{i!}\{p^iC_i\}_{\Delta}\right)\right]_{z^r\mathbf p^{\mathbf a}} = 0
\end{equation*}
hold under the conditions \eqref{eq:modcond} and \eqref{eq:sizecondb}.

Ignoring terms of higher order in the $p_i$ the second factor can be
rewritten
\begin{align*}
  \exp\left(\sum_{i = 1}^\infty \frac 1{i!}\{p^iC_i\}_{\Delta}\right) \equiv \exp\left(\sum_{\emptyset \neq S \subseteq \{1, \dotsc, n\}} \{p^S C_{|S|}\}_\Delta\right) \\
  \equiv \sum_{S \subseteq \{1, \dotsc, n\}} p^S \sum_{P \vdash S}
  \prod_{i \in P} \{C_{|i|}\}_{D_i}.
\end{align*}

Thus we finally obtain that the FZ-type relations
\begin{equation*}
  \left[\exp(-\{\log(A)\}_\kappa) \sum_{P \vdash S} \prod_{i \in P} \{C_{|i|}\}_{D_i}\right]_{z^r} = 0,
\end{equation*}
hold for any $S \subseteq \{1, \dotsc, n\}$ if \eqref{eq:modcond} and
\eqref{eq:sizecondb}.

\section{Localization sums}
\label{sec:locsum}

In the derivation of the more general stable quotient relations we
will need to deal with two types of localization sums related to the
nodes and marked points, respectively. To keep the length of the proof
of the stable quotient relations more reasonable we will deal with
them here. The sums are genus independent and have been studied more
generally by Givental \cite{Gi98}. We apply here his method in a
special case. See also \cite{Gi01a} and \cite{LePa04P}.

Let $N_{\mathbb C^*}^*(\mathbb P^1)$ be the Novikov ring of $\mathbb
P^1$ with values in $\mathbb C[s, s^{-1}]$. We define a formal
Frobenius manifold structure on $X = A_{\mathbb C^*}^*(\mathbb P^1)
\otimes N_{\mathbb C^*}^*(\mathbb P^1)$ over $N_{\mathbb
  C^*}^*(\mathbb P^1)$ using equivariant Gromov-Witten theory. The
$N_{\mathbb C^*}^*(\mathbb P^1)$-module $X$ is free of dimension two
with basis $\{\phi_0, \phi_\infty\}$ for $\phi_i = [i]/e_i$, where
$e_i$ is the equivariant Euler class of the tangent space of $\mathbb
P^1$ at $i$. We denote the corresponding coordinate functions by
$y_0$, $y_\infty$. This gives a basis $\{\frac{\partial}{\partial
  y_0}, \frac{\partial}{\partial y_\infty}\}$ of the space of vector
fields on $X$. The metric is given in this basis by
\begin{equation*}
  g =
  \begin{pmatrix}
    s^{-1} & 0 \\
    0 & -s^{-1}
  \end{pmatrix}
  =
  \begin{pmatrix}
    e_0^{-1} & 0 \\
    0 & e_\infty^{-1}
  \end{pmatrix}.
\end{equation*}

The primary equivariant Gromov-Witten potential is
\begin{equation*}
  \sum_{n = 0}^\infty \frac 1{n!} \sum_{d = 0}^\infty x^d \int\limits_{[\Mbar_{0, n}(\mathbb P^1, d)]^{vir}} \prod_{i = 1}^n \ev_i^*(y_0\phi_0 + y_\infty\phi_\infty) = \frac{y_0^3}{6s} - \frac{y_\infty^3}{6s} + xe^w,
\end{equation*}
where we have set $w = (y_0 - y_\infty)/s$ (compare to
\cite{OkPa06b}). Thus we have
\begin{equation*}
  \alpha =
  \begin{pmatrix}
    \mathrm dy_0 + \frac x{e_0} e^w \mathrm dw & \frac x{e_\infty} e^w \mathrm dw \\
    \frac x{e_0} e^w \mathrm dw & \mathrm dy_\infty + \frac x{e_\infty} e^w \mathrm dw
  \end{pmatrix},
\end{equation*}
for the matrix of one forms
\begin{equation*}
  \alpha = \sum_i A_i \mathrm dy_i,
\end{equation*}
where the matrices $A_i$ are defined by the quantum product
\begin{equation*}
  \frac{\partial}{\partial y_i} \star \frac{\partial}{\partial y_a} = \sum_b [A_i]_a^b \frac{\partial}{\partial y_b}.
\end{equation*}

Using $\alpha$ we can compactly write down the differential equation
for the $S$-matrix
\begin{equation*}
  (t\mathrm d  - \alpha)S = 0,
\end{equation*}
with initial condition $S(0, 0) = \mathrm{Id}$.

If we set
\begin{equation*}
  S = \begin{pmatrix} S_0^0 & S_\infty^0 \\ S_0^\infty & S_\infty^\infty \end{pmatrix},
\end{equation*}
this gives the system
\begin{equation*}
  t\mathrm dS_i^j = S_i^j \mathrm dy_j + \sum_k S_i^k \frac x{e_k} e^w \mathrm dw,
\end{equation*}
with unique solution
\begin{equation*}
  S_i^j = e^{\frac{y_i}t}\left(\left(\frac{1 + \zeta_i\zeta_j}2 - tx\frac{\mathrm d}{\mathrm dx}\right) \Phi\right)\left(-\frac t{e_i}, \frac{x e^w}{e_i^2}\right)
\end{equation*}
under the initial conditions. Here the signs $\zeta_i \in \{\pm 1\}$
are defined by $\zeta_i = \frac{e_i}s \in \{\pm 1\}$.

There is a set of canonical coordinates $u^i$ on $X$ defined by the
localization sums
\begin{equation*}
  u^i = \sum_{n = 0}^\infty \sum_{d = 0}^\infty x^d \sum_{\Gamma \in G_{0, n + 2}^{u^i}(\mathbb P^1, d)} \Cont_\Gamma \left(e_i \frac{\partial^2 F_0}{(\partial y_i)^2}\right),
\end{equation*}
where $G_{0, n + 2}^{u^i}(\mathbb P^1, d)$ is the set of $(n +
2)$-point (each of weight 1) degree $d$ localization graphs with the
first two points on a single component contracted to $i$ and
$\Cont_\Gamma$ stands for the contribution of a graph $\Gamma$ in the
localization formula. By the Gromov-Witten stable quotient comparison
and since there is only one fixed locus on the stable quotient side we
have
\begin{equation*}
  u^i(0, 0) = e_i e_i^{- d - 1} \sum_{d = 1}^\infty \frac{x^d}{d!} \int\limits_{\Mbar_{0, 2 | d}} \sum_{j = 0}^\infty \frac{c_j(\mathbb F_d)}{e_i^j},
\end{equation*}
where $\Mbar_{0, 2 | d} = \Mbar_{0, (1^2, \varepsilon^d)}$ is a
Losev-Manin space. Since the Hodge bundle is trivial on $\Mbar_{0, 2
  | d}$ this simplifies to
\begin{equation*}
  u^i(0, 0) = e_i e_i^{- d - 1} \sum_{d = 1}^\infty \frac{x^d}{d!} \int\limits_{\Mbar_{0, 2 | d}} \sum_{j = 0}^\infty \frac{c_j(-\mathbb B_d)}{e_i^j}.
\end{equation*}
Recalling the definition of $\mathbb B_d$ we can write the integrand
on the right hand side as a sum of monomials in $\psi$- and diagonal
classes. The integral of such a monomial vanishes unless it is the
diagonal where all $d$ points come together. The constants $-C_d^{-1}$
which are defined from $\log(\Phi)$ by
\begin{equation*}
  \log(\Phi(t, x)) = \sum_{d = 1}^\infty \sum_{r = -1}^\infty C_d^r t^r \frac{x^d}{d!}
\end{equation*}
exactly count these contributions. So we get
\begin{equation}
  \label{eq:canonicalcoord}
  u^i(0, 0) = -\sum_{d = 1}^\infty C_d^{-1} \frac{x^d}{d!} e_i^{-2d + 1}.
\end{equation}

Using the $S$-matrix and the $u^i$ we can now calculate the series we
are interested in. We will not regard $S$ and the $u^i$ outside $(0,
0)$, so let us write from now on $S_i^j$ for $S_i^j(0, 0)$ and $u^i$
for $u^i(0, 0)$.

The first series $P^{ij}$ will be needed to deal with stable quotient
localization chains containing one of the weight $1$ marked points. We
have
\begin{equation*}
  P^{ij}(t, x) := \frac{1 + \zeta_i\zeta_j}2 + \sum_{d = 1}^\infty x^d \hspace{-4mm}\sum_{\Gamma \in G_{0, 2}^{P^{ij}}(\mathbb P^1, d)} \frac{e_i e^{u^i/\omega_\Gamma}}{\omega_\Gamma - t} \Cont_\Gamma\left(\frac{\partial F_0}{\partial y_i\partial y_j}\right) = e^{u^i/t} \zeta_i \zeta_j S_i^j(-t),
\end{equation*}
where $G_{0, 2}^{P^{ij}}(\mathbb P^1, d)$ is the set of all
localization graphs with the first marking on a valence 2 vertex at
$i$ and the second marking at $j$ and $\omega_\Gamma$ is the $\omega$
as in \eqref{eq:sqvertex} corresponding to the flag at the first
marking.

The second series is needed to deal with chains at the nodes of the
curve. We have
\begin{align*}
  E^{ij}(t_1, t_2, x) := \sum_{d = 1}^\infty x^d \sum_{\Gamma \in G_{0, 2}^{E^{ij}}(\mathbb P^1, d)} \frac{e_ie^{u^i/\omega_{\Gamma, 1}}}{\omega_{\Gamma, 1} - t_1} \frac{e_je^{u^j/\omega_{\Gamma, 2}}}{\omega_{\Gamma, 2} - t_2} \Cont_\Gamma\left(\frac{\partial F_0}{\partial y_i\partial y_j}\right) \\
  = \frac s{t_1 + t_2} \left(\frac{\zeta_i + \zeta_j}2 - e^{u^i/t_1 + u^j/t_2} \zeta_i\zeta_j\sum_{\mu} S_i^\mu(-t_1) \zeta_\mu S_j^\mu(-t_2)\right),
\end{align*}
where $G_{0, 2}^{E^{ij}}(\mathbb P^1, d)$ is the set of all
localization graphs with both markings on valence 2 vertices, the
first marking mapped to $i$, the second marking to $j$, and
$\omega_{\Gamma, 1}$, $\omega_{\Gamma, 2}$ are $\omega$ as in
Section~\ref{sec:loc:formula} corresponding to the flag at the first
and second marking respectively.

Let us now simplify the expressions for $P^{ij}(t, x)$ and
$E^{ij}(t_1, t_2, x)$ using the explicit $S$-matrix and canonical
coordinates at 0. For $P^{ij}(t, x)$ we obtain
\begin{align*}
  P^{ij}(t, x) =& \exp\left(-\sum_{d = 1}^\infty C_d^{-1} t^{-1} \frac{x^d}{d!} e_i^{-2d}\right) \left(\left(\frac{1 + \zeta_i\zeta_j}2 - t \zeta_i\zeta_j x\frac{\mathrm d}{\mathrm dx}\right) \Phi\right)\left(\frac t{e_i}, \frac x{e_i^2}\right) \\
  =& \left(\left(\frac 12 - \zeta_i\zeta_j \delta\right) \Phi'\right)\left(\frac t{e_i}, \frac x{e_i^2}\right),
\end{align*}
where $\Phi'$ is defined by
\begin{equation*}
  \Phi'(t, x) = \exp\left(-\sum_{d = 1}^\infty C_d^{-1} t^{-1} \frac{x^d}{d!}\right) \Phi(t, x) = \exp\left(\sum_{d = 1}^\infty \sum_{r = 0}^\infty C_d^r t^r \frac{x^d}{d!}\right).
\end{equation*}
Similarly we obtain
\begin{align*}
  \frac{t_1 + t_2}s E^{ij}(t_1, t_2, x) = &\frac{\zeta_i + \zeta_j}2 + \\
  &\Phi'\left(\frac{t_1}{e_i}, \frac x{e_i^2}\right)\Phi'\left(\frac{t_2}{e_j}, \frac x{e_j^2}\right) \left(\zeta_i\delta\left(\frac{t_1}{e_i}, \frac x{e_i^2}\right) + \zeta_j\delta\left(\frac{t_2}{e_j}, \frac x{e_j^2}\right)\right).
\end{align*}

\section{The general case}
\label{sec:general}

We now extend the relations of Section~\ref{sec:open} to the boundary
and allow nearly arbitrary weights. More precisely we will assume that:
\begin{equation}
  \label{eq:weightcond}
  \text{If for some $S \subset \{1, \dotsc, n\}$ we have $\sum_{i \in S} w_i =
1$, we must have $|S| = 1$.}
\end{equation}
We still obtain relations for any weight data since one can always
modify $\mathbf w$ a bit such that the moduli space $\Mbar_{g, \mathbf
  w}$ does not change (whereas $\Qbar_{g, \mathbf w}(\mathbb P^1, d)$
will change). It is even not necessary to allow that there exist
points of weight 1 at all but it is interesting to see different ways
of obtaining the same relations.

\subsection{Statement of the stable quotient relations}
\label{sec:gen:statement}

Let $\mathcal G$ be the set of stable graphs describing the strata on
$\Mbar_{g, \mathbf w}$. The data of $\mathcal G$ in particular
includes a map $p$ from $\{1, \dotsc, n\}$ to the set of vertices as
in Section~\ref{sec:loc:fixed}. Let us assume that the first $n'$
points are of weight different from $1$ and the remaining $n''$ points
are of weight $1$.

\begin{prop}
  \label{prop:gen:sqrel}
  For a codimension $r$, a degree $d$, an $n$-tuple $\mathbf a$ such
  that $g - 2d - 1 + |\mathbf a| < r - |E|$, in $A^r(\Mbar_{g, \mathbf
    w})$ the relation
  \begin{multline*}
    0 = \Bigg[\sum_{\substack {\Gamma \in \mathcal G \\ \zeta: \Gamma \to
        \{\pm 1\}}} \frac 1{\Aut(\Gamma)} \xi_{\Gamma *}\Big(\prod_v
    \mathrm{Vertex^3}_v^{\zeta(v)} \prod_e \mathrm{Edge^3}_e^{\zeta(v_1),
      \zeta(v_2)} \\
    \prod_{i = n' + 1}^n \mathrm{Point^3}(\psi_i\zeta(p(i))t, p_i)
    \Big)\Bigg]_{t^{r - |E|}x^d\mathbf p^{\mathbf a}}
  \end{multline*}
  holds, where $\mathrm{Vertex^3}_v^\zeta$ is a product
  \begin{equation*}
    \mathrm{Vertex^3}_v^\zeta = \zeta^{g(v) - 1} \exp\left(-\frac 12 \zeta p_{(v)} + \{\exp(p_{(v)}D) \gamma(t\zeta, x)\}_{\Delta^{(v)}} + V^\zeta\right)
  \end{equation*}
  in $A^*(\Mbar_{g(v), \mathbf w_v})[x, t, \mathbf p]$, where
  $p_{(v)} = \sum_{n' \ge i \in p^{-1}(v)} p_i$, $\{\}_{\Delta^{(v)}}$
  is defined identically compared to $\{\}_\Delta$ but $\kappa_j$ is
  replaced by $\kappa_j^{(v)}$ --- the $\kappa_j$ class at $v$ --- and
  \begin{equation*}
    V^\zeta = - \sum_{i \ge 1} \frac{B_{2i}}{2i \cdot (2i - 1)}
    \sum_{\Delta \in \mathcal{DG}}
    \frac 1{\Aut(\Delta)}\xi_{\Delta, *}\left(\frac{\psi_a^{2i - 1} + \psi_b^{2i -
          1}}{\psi_a + \psi_b}\right) (\zeta t)^{2i - 1}.
  \end{equation*}
  Here $\mathcal{DG} \subseteq \mathcal G$ is the set of graphs
  corresponding to divisor classes, i.e. graphs with only one edge,
  and $\psi_a$, $\psi_b$ are the two cotangent line classes
  corresponding to the unique node corresponding to a divisor. The
  edge and point series are given by
  \begin{align*}
    t(\psi_1 + \psi_2) \mathrm{Edge^3}_e^{\zeta_1, \zeta_2} =
    (\Phi'(t\zeta_1\psi_1)\Phi'(t\zeta_2\psi_2))^{-1}\frac{\zeta_1 + \zeta_2}2 + \\
    \zeta_1 \delta(t\zeta_1\psi_1, x) + \zeta_2 \delta(t\zeta_2\psi_2, x),
  \end{align*}
  where $\Phi'$ is from Section~\ref{sec:locsum}, and
  \begin{equation*}
    \mathrm{Point^3}(t, p) = 1 + p \delta(t, x).
  \end{equation*}
\end{prop}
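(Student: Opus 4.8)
The plan is to run the localization argument of Section~\ref{sec:open:proof} on the full space $\Qbar_{g, \mathbf w}(\mathbb P^1, d)$, transporting everything to the stable maps side through the comparison $c_*([\Qbar_{g, \mathbf w}(\mathbb P^1, d)]^{vir}) = [\Mbar_{g, \mathbf w}(\mathbb P^1, d)]^{vir}$. As before I would take $s_i$ to be the pullback of $c_1(\mathcal S)$ along the $i$-th section, choose the symmetric equivariant lift $\tilde{\mathbf s}^{\mathbf a} = \prod_i (-s_i - \tfrac12 s)^{a_i}$, and push the capped class forward along $\nu$ to $\Mbar_{g, \mathbf w}$. Since the image is an honest (non-localized) equivariant class, its negative-$s$ part must vanish, and extracting the $s^c$ coefficient for $c<0$ yields the relations exactly as in the open case. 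The degree bound $g - 2d - 1 + |\mathbf a| < r - |E|$ will emerge from the same dimension count as in Section~\ref{sec:open}, the shift by $|E|$ reflecting the codimension of each node together with the cotangent-line $t$-weight it carries.

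The genuinely new feature is the far larger set of fixed loci. I would index them by the combinatorial data of Section~\ref{sec:loc:fixed} and use the factorization $\Cont_\Gamma = \prod_v \Cont(v) \prod_e \Cont(e)$ of the inverse equivariant Euler class. Under $\nu$ a localization graph maps to the stratum of $\Mbar_{g, \mathbf w}$ obtained by contracting all degenerate vertices; conversely, each stable graph $\Gamma \in \mathcal G$ is the image of the whole collection of localization graphs in its $\nu$-fiber. The key organizational step is therefore to resum the localization contributions along these fibers, so that the non-degenerate vertices become the vertices of $\Gamma$, the nodes become its edges, and the weight-$1$ markings become the point factors; this reassembly is what converts the localization sum into the clutching-map pushforwards $\xi_{\Gamma *}$ with the automorphism weights $1/\Aut(\Gamma)$.

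It then remains to identify the three building blocks. At a non-degenerate vertex, the product of $\Cont(v)$ from \eqref{eq:sqvertex} with the $\tilde{\mathbf s}^{\mathbf a}$-factors of the low-weight markings repeats the computation of Section~\ref{sec:open:proof} almost verbatim (the $\mathbb B_d$ Chern-class identity and the substitution into $\gamma$ and $\delta$ are purely formal), now on $\Mbar_{g(v), \mathbf w_v}$, producing $\exp(-\tfrac12 \zeta p_{(v)} + \{\exp(p_{(v)}D)\gamma(t\zeta, x)\}_{\Delta^{(v)}})$, with the $-\tfrac12 \zeta p_{(v)}$ term coming from the $-\tfrac12 s$ weight shift. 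The only difference from the smooth case is that Mumford's Grothendieck-Riemann-Roch formula for the Hodge class $\mathbb E^*$ on the boundary-bearing vertex space contributes, besides the $\kappa$ term, a correction supported on its divisor strata; this is precisely the factor $V^\zeta$ with its sum over $\mathcal{DG}$ and the edge expression $(\psi_a^{2i-1} + \psi_b^{2i-1})/(\psi_a + \psi_b)$. Chains of degenerate vertices and edges bridging two non-degenerate vertices across a node resum, by the localization sums of Section~\ref{sec:locsum}, to the propagator $E^{ij}$, which after the normalization that apportions the $\Phi'$-factors onto the adjacent vertices becomes $\mathrm{Edge^3}_e^{\zeta_1, \zeta_2}$, with the formal variables realized by the cotangent lines $\psi_1, \psi_2$ at the two branches. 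Likewise, chains terminating at a weight-$1$ marking resum to $P^{ij}$, giving $\mathrm{Point^3}(t, p) = 1 + p\delta(t, x)$, whose linearity in $p$ is the geometric shadow of the reduction to $a_i < 2$ from Section~\ref{sec:open:diffalg}.

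The main obstacle I expect is precisely this combinatorial resummation: matching localization graphs, which may carry arbitrarily long chains of degenerate vertices at each node and at each weight-$1$ marking, to the comparatively coarse stable graphs of $\Mbar_{g, \mathbf w}$, and verifying that the chain-sums collapse exactly to the closed-form series $E^{ij}$ and $P^{ij}$ of Section~\ref{sec:locsum}. Carrying this out cleanly requires care with two bookkeeping points: reconciling the automorphism factors $1/\Aut(\Gamma)$ between the two combinatorial structures, and correctly distributing the vertex normalization $\Phi'$ between the $\mathrm{Vertex^3}$ and $\mathrm{Edge^3}$ terms, which is why the edge term must carry the inverse factor $(\Phi'(t\zeta_1\psi_1)\Phi'(t\zeta_2\psi_2))^{-1}$. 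Extracting $V^\zeta$ from the boundary terms of Mumford's formula on the nodal vertex curves is a second, more local, point to be handled with matching conventions for the node cotangent lines.
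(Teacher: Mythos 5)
Your plan reproduces the architecture of the paper's own proof: localization on $\Qbar_{g,\mathbf w}(\mathbb P^1,d)$ with the symmetric lift, fixed loci sorted by the stratum of $\Mbar_{g,\mathbf w}$ they push forward to, chain sums evaluated through Section~\ref{sec:locsum}, the divisor term $V^\zeta$ extracted from the boundary part of Mumford's formula, and relations read off from the negative powers of $s$. However, at the step you yourself single out as the main obstacle there is a genuine gap, and the assertion you make there is false as stated: the raw localization chain sums attached to a node or to a weight-one marking do \emph{not} collapse to $E^{ij}$ or $P^{ij}$. The series of Section~\ref{sec:locsum} are defined with the canonical-coordinate factors $e^{u^i/\omega_\Gamma}$ inserted at the chain ends (in addition to the ${\Phi'}^{-1}$ normalization, which you do notice), whereas the localization formula by itself only produces sums of the form $\sum_d x^d \sum_{\Gamma_e^d}\frac 1{\Aut(\Gamma_e^d)}\frac 1{\omega_{\Gamma_e^d,1}-\psi_1}\frac 1{\omega_{\Gamma_e^d,2}-\psi_2}\prod_f\Cont(f)\prod_v\Cont(v)$. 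These missing exponential factors cannot be found inside the chain; in the paper they are generated at the adjacent \emph{vertices}. The $\psi$ classes in the factors $1/(\omega-\psi)$ live on $\Mbar_{g(v),(\mathbf w(v),\varepsilon^{d(v)})}$ rather than on $\Mbar_{g(v),\mathbf w_v}$, so the $\varepsilon$-pushforward forgetting the torsion points must be corrected via $\psi_i^k = \varepsilon^*(\psi_i^k)+\varepsilon^*(\psi_i^{k-1})\sum_{\emptyset\neq S}\Delta_{iS}$; pairing these diagonal corrections with $c(-\mathbb B_{d(v)})$ and summing over the torsion degree produces exactly one factor $\exp(u^{\zeta}/\omega_i)\,{\Phi'}^{-1}$ for each flag adjacent to $v$ --- this is \eqref{eq:localpushforward}, with $u^\zeta$ the canonical coordinate of \eqref{eq:canonicalcoord}. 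Only after this vertex-side computation do the edge and point sums become $E^{ij}$ and $P^{ij}$, and hence, by the closed formulas of Section~\ref{sec:locsum}, the terms $\mathrm{Edge^3}$ and $\mathrm{Point^3}$; without it the matching, and with it the Proposition, does not follow.

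A second, smaller discrepancy: at the weight-one markings you keep $\tilde s_i = -s_i-\frac 12 s$, whereas the paper replaces these by $\ev_i^*\bigl(\bigl(\frac{[0]+[\infty]}2\bigr)^{a_i}\bigr)$, precisely because the torsion of $\mathcal S$ avoids those sections. This turns out to be harmless: on a weight-one section $\mathcal S$ is a subbundle, so $s_i = \ev_i^*c_1(\mathcal O_{\mathbb P^1}(-1))$ and your lift agrees with the paper's class up to the sign $(-1)^{a_i}$. But the consequence should be stated explicitly: all diagonal classes $D_{ij}$ between a weight-one marking and the torsion points vanish, so the integrand restricted to a fixed locus contributes only the pure weight $(\mp\zeta s/2)^{a_i}$ there, and the entire $\psi_i$- and $\delta$-dependence of $\mathrm{Point^3}$ must come from the chain resummation above, not from the integrand; running the open-locus vertex computation verbatim at such a marking, as your third paragraph suggests, would produce no such terms.
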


\subsection{Proof of the stable quotient relations}

In Section~\ref{sec:open:proof} we looked at the integrand $\mathbf
s^{\mathbf b}$ coming from powers of the pulled back first Chern class
of the universal sheaf $\mathcal S$ over $\C_{g | n}(\mathbb P^1,
d)$. This works also well for $\Qbar_{g | \mathbf w}(\mathbb P^1, d)$
if no point is of weight $1$. For the points of weight $1$ we however
need to choose different classes since by the stability conditions the
torsion of $\mathcal S$ must be away from the sections of points of
weight $1$. Instead we will pull back classes from $\mathbb P^1$ via
evaluation maps.

For an $n$-tuple $\mathbf a$, which can be split into an $n'$-tuple
$\mathbf a'$ and an $n''$-tuple $\mathbf a''$, we will thus study the
equivariant class
\begin{equation*}
  \varphi(\mathbf a) := \tilde{\mathbf s}^{\mathbf a'} \prod_{i = n' + 1}^n \mathrm{ev}_i^*\left(\left(\frac{[0] + [\infty]}2\right)^{a_i}\right) \in A_{\mathbb C^*}^{|\mathbf a|}(\Qbar_{g, \mathbf w}(\mathbb P^1, d)),
\end{equation*}
where $[0]$ and $[\infty]$ are the equivariant classes of $0$ and
$\infty$ respectively, and $\tilde{\mathbf s}^{\mathbf a'}$ is the
equivariant class from Section~\ref{sec:open:proof}. Since $A_{\mathbb
  C^*}^*(\mathbb P^1) \otimes \mathbb Q[s, s^{-1}]$ is a two
dimensional $\mathbb Q[s, s^{-1}]$ module we do not lose any relations
by considering only the case when $\mathbf a''$ is $\{0, 1\}$ valued.

As before we consider the $s^c$ part of the push-forward of
$\varphi(\mathbf a)$ for $c < 0$ when using the virtual
localization formula.

In order to gain overview over the in $d$ monotonously growing number
of fixed loci we sort them according to the stratum of $\Mbar_{g,
  \mathbf w}$ they push forward to. Let us consider stable graphs
$\Gamma = (V, E)$ of $\Mbar_{g, \mathbf w}$ together with a coloring
$\zeta: V \to \{\pm 1\}$ and a degree assignment $d: V \sqcup E \sqcup
\{n' + 1, \dots, n\} \to \mathbb Z_{\ge 0}$. This data records
coloring and the $d|_V$ from a stable quotient graph, the total
degrees of the chains which destabilize to a node or a weight $1$
marked point when pushing forward to $\Mbar_{g, \mathbf w}$. To get
back to a stable quotient graph one needs to choose for each edge and
each weight $1$ marked point of degree $d$ a splitting $d = d_{e_1} +
d_{v_1} + \dotsb + d_{v_{\ell - 1}} + d_{e_\ell}$ of $d$ corresponding
to the degrees on the chain (see Figure~\ref{fig:chain}). Because of the
conditions on the coloring of a stable graph there is a mod 2
condition on the length $\ell$ of the chains depending on the color at
the connection vertices or $\mathbf a$. If we had not imposed
\eqref{eq:weightcond}, we would also have chains for each set of marked
points of total weight exactly $1$.

\begin{figure}
  \caption{An edge and a point chain}
  \label{fig:chain}
  \begin{tikzpicture}[label distance=-2.5mm]
    \fill (-2.5, 0) circle (2pt) node[left] {$0$} node[right] {$+1$}
    (-2.5, 1) circle (2pt) node[left] {$\infty$} node[right] {$-1$};
    \draw (-2.5, -0) -- (-2.5, 0.5) node[left] {$\mathbb P^1$}
    node[right] {$\zeta$}-- (-2.5, 1);

    \draw (0, 0) node[left] {$\Mbar_{g(v), \mathbf w(v)}$} \foreach \i
    in {1,...,5} {-- ($\i*(0.4, 0) + (-0.2, 0.5)$) node[label=3*(-1)^\i:$d_{e_\i}$] {}
      -- ($\i*(0.4, 0) + (0, 0.5) + 0.5*pow(-1,\i+1)*(0,1)$)
      node[label=-90+180*\i:$d_{v_\i}$] {} } -- (2.2,0.5) node[right]
    {$d_{e_6}$} -- (2.4,0) node[right] {$\Mbar_{g(w), \mathbf w(w)}$};
    
    \begin{scope}[xshift=6cm]
      \draw (0, 1) node[left] {$\Mbar_{g(v), \mathbf w(v)}$} \foreach
      \i in {1,...,4} {-- ($\i*(0.4, 0) + (-0.2, 0.5)$)
        node[label=-3*(-1)^\i:$d_{e_\i}$] {} --
        ($\i*(0.4, 0) + (0, 0.5) - 0.5*pow(-1,\i+1)*(0,1)$)
        node[label=90+180*\i:$d_{v_\i}$] {} } -- (1.8,0.5) node[right]
      {$d_{e_5}$} -- (2,0) node[right] {point};
  \end{scope}
  \end{tikzpicture}
\end{figure}
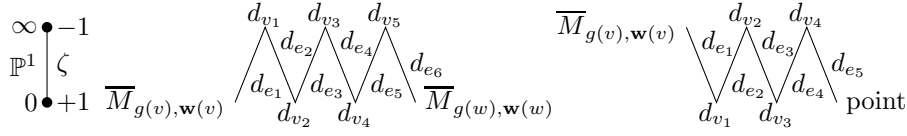

The fixed loci corresponding to such a decorated stable graph are, up
to a finite map, isomorphic to products
\begin{equation*}
  \prod_v \Mbar_{g(v), (\mathbf w(v), \varepsilon^{d(v)})}/S_{d(v)}
\end{equation*}
times a number of Losev-Manin factors $\Mbar_{0, 2 | d}/S_d$
corresponding to the vertices inside the chains.

For the localization calculation we will also have to consider the
pull-back of the integrand $\varphi(\mathbf a)$ to each fixed
locus. The factors $\ev_i^*(([0] + [\infty])^{a_i})$ of
$\varphi(\mathbf a)$ merely restrict the coloring of the stable
quotient graphs and their contribution is pulled back from the
equivariant cohomology of a point. The other factors $s_i^{a_i}$ need
to be partitioned along the factors of the stable quotient fixed
locus. However it is easy to see that in order for the contribution to
be nonzero $s_i^{a_i}$ has to land at the factor corresponding to the
vertex $p(i)$.

After all these preconsiderations let us write down the localization
formula. It will be the most convenient to write it in a power series
form. We have
\begin{equation*}
  \nu_*^{vir}\varphi(\mathbf a) = \left[\sum_{\Gamma, \zeta, d} \frac 1{\Aut(\Gamma)} \xi_{\Gamma *} \varepsilon_* \left(\prod_v
      \mathrm{Vertex}^1_v \prod_e \mathrm{Edge}^1_e
      \prod_{i = n' + 1}^n \mathrm{Point}_i^1\right)\right]_{x^d\mathbf p^{\mathbf a}},
\end{equation*}
where $\nu_*^{vir}$ again denotes push-forward using $\nu$ after
capping with the virtual fundamental class and the vertex, edge and
point series still need to be defined.

The vertex series $\mathrm{Vertex}^1_v$ is given by
\begin{equation*}
  \mathrm{Vertex}^1_v = (\zeta(v)s)^{g(v) - 1} \exp(-T_1) T_2 \in \bigoplus_{d = 0}^\infty A_{\mathbb C^*}^*(\Mbar_{g(v), (\mathbf w(v), \varepsilon^d)}) \otimes \mathbb Q[s,s^{-1}][\![x, \mathbf p]\!]
\end{equation*}
with
\begin{align*}
  T_1 =& \sum_{n' \ge i \in p^{-1}(v)} s_i p_i + \frac 12\zeta(v) \sum_{n' \ge i \in p^{-1}(v)} sp_i\\
  T_2 =& \sum_{j = 0}^\infty \sum_{d = 0}^\infty (\zeta(v) s)^{d - j}
  c_j(\mathbb F_d) \frac{x^d}{(\zeta(v) s)^{2d} d!}.
\end{align*}

For the edge series $\mathrm{Edge}^1_e$ we have
\begin{align*}
  \mathrm{Edge}^1_e =& \sum_{d = 1}^\infty x^d \sum_{\Gamma_e^d = (V_e, E_e)} \frac 1{\Aut(\Gamma_e^d)} \\
  &\frac{\zeta_1 s}{\omega_{\Gamma_e^d, 1} - \psi_1} \frac
  {\zeta_2 s}{\omega_{\Gamma_e^d, 2} - \psi_2} \prod_{f\text{ edge}} \Cont(f)
  \prod_{v\text{ vertex}} \Cont(v),
\end{align*}
where $\Gamma_e^d$ is a stable quotient localization graph of
$\Qbar_{0, 2}(\mathbb P^1, d)$ \footnote{which is here a chain similar
  to the first in Figure~\ref{fig:chain}} with color of the two
vertices determined by $\zeta$ at the two vertices adjacent to
$e$. The variables $\zeta_i$, $d_i$, $\psi_i$ for $i \in \{1, 2\}$
denote the color, the weight and the $\psi$-classes of the two marked
points corresponding to the edge respectively. The contributions
$\Cont(f)$ and $\Cont(v)$ are contributions to the calculation of the
integrals
\begin{equation*}
  \int\limits_{\Qbar_{0, 2}(\mathbb P^1, d)} \ev_1^*(\phi_1) \ev_2^*(\phi_2)
\end{equation*}
as in Section~\ref{sec:locsum}. By the stable quotients Gromov-Witten
comparison we can replace $\Qbar_{0, 2}(\mathbb P^1, d)$ everywhere in
this paragraph by $\Mbar_{0, 2}(\mathbb P^1, d)$ and then
$\mathrm{Edge}^1_e$ becomes very similar to $E^{ij}(t_1, t_2, x)$ from
Section~\ref{sec:locsum}.

Finally the point series $\mathrm{Point}_i^1$ is similarly given by
\begin{align*}
  \mathrm{Point}_i^1 = &\left(\frac s2 \zeta_i\right)^{a_i} + \\
  &\sum_{d = 1}^\infty x^d \sum_{\Gamma_i^d = (V_i, E_i)} \frac
  1{\Aut(\Gamma_i^d)} \frac{\zeta_i s}{\omega_{\Gamma_i^d} - \psi_i}
  \prod_{f\text{ edge}} \Cont(f) \prod_{v\text{ vertex}} \Cont(v),
\end{align*}
where $\zeta_i$ is $\zeta$ at the vertex with $i$, $\Gamma_i^d$ is a
stable quotient localization graph of $\Qbar_{0, 2}(\mathbb P^1, d)$
with color of the first vertex determined by $\zeta_i$. Here the
contributions $\Cont(f)$ and $\Cont(v)$ are contributions to the
calculation of the integrals
\begin{equation*}
  \int\limits_{\Qbar_{0, 2}(\mathbb P^1, d)} \ev_1^*(\phi_1) \ev_2^*\left(\phi_2 \left(\frac{[0] + [\infty]}2\right)^{a_i}\right).
\end{equation*}
The first summand corresponds to the case when the length of the chain
is $0$. Its form comes from the identity $[0] - [\infty] = s$ and the
fact that $[0] \cdot [\infty] = 0$.

\subsubsection{Pushing forward}

Next we will study the $\varepsilon$-push-forward in the formula for
$\nu_*^{vir}\varphi(\mathbf a)$. For this we want to replace the
$\psi$-classes in the edge and point terms by $\psi$-classes
pulled back via $\varepsilon$, since the other $\psi$-classes are
already pull-backs. For a fixed localization graph $\varepsilon$ is
a composition of local maps, one for each vertex in the graph, of the
form $\Mbar_{g(v), (w(v), \varepsilon^{d(v)})} \to \Mbar_{g(v),
  w(v)}$, and one for each edge and marked point which contracts
products of factors of the form $\Mbar_{0, 2 | d}$.

Let us first look at the push-forward local to a vertex. For this we
only need to look at the product of the vertex term with the factors
of the form
\begin{equation*}
  \frac 1{\omega_i - \psi_i}  
\end{equation*}
from the adjacent edge and point series. Let us simplify the notation
for this local discussion. With $d$ we denote the degree at this
vertex, with $n = n' + n''$ the number of marked points with weight
different or equal to $1$ respectively, and with $\mathbf w$ the
weights at the vertex. We will index the marked points by the set
$\{1, \dotsc, n\} \sqcup \{1, \dotsc, d\}$. Hopefully the non-empty
intersection of these sets will not cause any confusion.

The basic pull-back formula is that
\begin{equation*}
  \psi_i = \varepsilon^*(\psi_i) + \Delta_{i1}
\end{equation*}
in the case that $d = 1$. Here $\Delta_{i1}$ is the boundary divisor
of curves who have one irreducible component of genus $0$ containing
only $i$ and the weight $\varepsilon$ point. This generalizes to the
formula
\begin{equation*}
  \psi_i^k = \varepsilon^*(\psi_i^k) + \varepsilon^*(\psi_i^{k - 1}) \sum_{\emptyset \neq T \subset \{1, \dotsc, d\}} \Delta_{iT},
\end{equation*}
where $\Delta_{iT}$ is the boundary divisor of curves who have one
genus $0$ irreducible component containing only $i$ and the weight
$\varepsilon$ points indexed by $T$. Thus
\begin{equation*}
  \frac 1{\omega_i - \psi_i} = \frac 1{\omega_i - \varepsilon^*(\psi_i)} \left(1 +\omega_i^{-1} \sum_{\emptyset \neq T \subset \{1, \dotsc, d\}} \Delta_{iT}\right).
\end{equation*}

Modulo factors pulled back via $\varepsilon$ the most general classes
we will need to push forward are products of factors of the form
\begin{itemize}
\item $D_{iS}$ for $i \in \{1, \dotsc, n'\}$, $S \subset \{1, \dotsc,
  d\}$,
\item $\Delta_{iT}$ for $i \in \{n' + 1, \dotsc, n\}$, $T \subset \{1,
  \dotsc, d\}$,
\item $M$, a monomial in $\psi$- and diagonal classes of the $d$
  points of weight $\varepsilon$.
\end{itemize}
In order for the push-forward to be nonzero the sets $S$ and $T$ must
be pairwise disjoint. Moreover for each factor $\Delta_{iT}$, the
diagonal class corresponding to $T$ must be a connected factor of $M$.

Therefore for a monomial $M = \prod_{i \in P} M_i$ corresponding to a
set partition $P \vdash \{1, \dotsc, d\}$ we obtain
\begin{multline*}
  \varepsilon_*\left(\prod_{i = n' + 1}^n \left(1 + \sum_{\emptyset \neq T \subset \{1, \dotsc, d\}} \omega_i^{-1} \Delta_{iT}\right) \exp\left(\sum_{i = 1}^{n'} s_ip_i\right) M\right) = \\
    \prod_{i \in P} \left(\delta^{\Delta}_i \sum_{j = n' + 1}^n \omega_j^{-1} + \varepsilon_{i*}\left(\exp\left(\sum_{j = 1}^{n'} s_jp_j\right) M_i\right)\right),
\end{multline*}
where here $\delta^{\Delta}_i$ is one if $M_i$ is a diagonal class and
zero otherwise and the $\varepsilon_i$ are forgetful maps
$\varepsilon_i: \Mbar_{g, (\mathbf w, \varepsilon^{|i|})} \to
\Mbar_{g, \mathbf w}$
\footnote{To think of $M_i$ as living on
  $\Mbar_{g, (\mathbf w, \varepsilon^{|i|})}$ one needs to choose a
  bijection $i \to \{1, \dotsc, |i|\}$ but the
  $\varepsilon_i$-push-forward is independent of that choice.}. Notice
that the push-forward is a product where each factor is dependent only
on one factor of $M$.

This allows us to use the arguments from
Section~\ref{open:proof:simple} to calculate the necessary
$\varepsilon$-push-forwards modulo classes pulled back via
$\varepsilon$. We obtain
\begin{equation}
  \label{eq:localpushforward}
  \begin{aligned}
    &\begin{aligned}
      \sum_{d = 0}^\infty \varepsilon_* \sum_{i = n' + 1}^n \left(1 + \sum_{\emptyset \neq T \subset \{1, \dotsc, d\}} \omega_i^{-1} \Delta_{iT}\right) \exp\left(\sum_{i = 1}^{n'} s_ip_i\right) \\
      \sum_{j = 0}^\infty (\zeta s)^{d - j} c_j(-\mathbb B_d)
      \frac{x^d}{(\zeta s)^{2d} d!} =
    \end{aligned}\\
    &\begin{aligned} \prod_{i = n' + 1}^n
      &\exp\left(\frac{u^{\zeta}}{\omega_i} -
        \log(\Phi')\left(\frac{\psi_i}{\zeta(v)s}, \frac
          x{\zeta(v)s}\right)\right) \\
      &\exp\left(\left\{\exp(-pD)\log(\Phi)\left(\frac t{\zeta(v)s},
            \frac x{\zeta(v)s}\right)\right\}_\Delta\right)\Big|_{t =
        1},
    \end{aligned}
  \end{aligned}
\end{equation}
where $u^\zeta$ is as in \eqref{eq:canonicalcoord} and $p = p_1 +
\dotsb + p_{n'}$. Here we rather artificially introduced a variable
$t$ to make use of the bracket notation.  Notice that in the first
factor of this formula the factor corresponding to some $i \in \{n' +
1, \dotsc, n\}$ depends only on $\omega_i$, which dependends on the
degree splitting of the chain, whereas the second factor is
independent of the degree splittings.

\smallskip

Now we can again step back from the vertex and look at the global
picture. With \eqref{eq:localpushforward} we can give a new formula
for $\nu_*^{vir}\varphi(\mathbf a)$
\begin{equation*}
  \nu_*^{vir}\varphi(\mathbf a) = \left[\sum_{\Gamma, \zeta} \frac 1{\Aut(\Gamma)} \xi_{\Gamma *} \left(\prod_v
      \mathrm{Vertex}^2_v \prod_e \mathrm{Edge}^2_e
      \prod_{i = 1}^n \mathrm{Point}_i^2\right)\right]_{x^d\mathbf p^{\mathbf a}},
\end{equation*}
with new vertex, edge and point series. The term $\mathrm{Vertex}^2_v$
already has the form as in the stable quotient relations
\begin{align*}
  &\mathrm{Vertex}^2_v = (\zeta(v) s)^{g(v) - 1} \\
  &\exp\left(-\frac 12\zeta(v) sp_{(v)} + \left\{\exp(p_{(v)}sD) \log(\Phi)\left(\frac t{\zeta(v)s}, \frac x{(\zeta(v)s)^2}\right)\right\}_{\Delta^{(v)}}\Big|_{t = 1} + V'^{\zeta(v)}\right),
\end{align*}
where
\begin{equation*}
  V'^\zeta = \sum_{j = 0}^\infty (\zeta s)^{-j} c_j(\mathbb E^*) \in A^*(\Mbar_{g(v), \mathbf w_v})[s,s^{-1}][\![x, s, s^{-1}]\!].
\end{equation*}
Furthermore the edge and point series are given by
\begin{align*}
  \mathrm{Edge^2}_e =& {\Phi'}^{-1}\left(\frac{\psi_1}{\zeta_1s},
    \frac x{\zeta_1s}\right)
  {\Phi'}^{-1}\left(\frac{\psi_2}{\zeta_2s}, \frac
    x{\zeta_2s}\right) \\
  \sum_{d = 1}^\infty x^d \sum_{\Gamma_e^d = (V_e, E_e)}& \frac
  1{\Aut(\Gamma_e^d)} \frac{\zeta_1 s
    e^{u^{\zeta_1}/\omega_{\Gamma_e^d,1}}}{\omega_{\Gamma_e^d,1} -
    \psi_1} \frac{\zeta_2 s
    e^{u^{\zeta_2}/\omega_{\Gamma_e^d,2}}}{\omega_{\Gamma_e^d,2} -
    \psi_2} \prod_{f\text{ edge}} \Cont(f) \prod_{v\text{ vertex}}
  \Cont(v)
\end{align*}
and
\begin{align*}
  \mathrm{Point}_i^2 =& {\Phi'}^{-1}\left(\frac{\psi_i}{\zeta_is},
    \frac
    x{\zeta_is}\right) \\
  \sum_{d = 0}^\infty x^d &\sum_{\Gamma_i^d = (V_i, E_i)} \frac
  1{\Aut(\Gamma_i^d)} \frac{\zeta_i s
    e^{u^{\zeta_i}/\omega_{\Gamma_i^d}}}{\omega_{\Gamma_i^d} - \psi_i}
    \prod_{f\text{ edge}} \Cont(f) \prod_{v\text{ vertex}} \Cont(v).
\end{align*}

Using Mumford's formula (pulled back from $\Mbar_g$ to $\Mbar_{g,
  \mathbf w}$) we find
\begin{align*}
  V'^\zeta = &-\sum_{i \ge 1} \frac{B_{2i}}{2i \cdot (2i - 1)}
  \kappa_{2i - 1} (\zeta s)^{1 - 2i} \\
  &-\sum_{i \ge 1} \frac{B_{2i}}{2i \cdot (2i - 1)} \sum_{\Delta
    \in \mathcal{DG}} \frac 1{\Aut(\Delta)} \xi_{\Delta,
    *}\left(\frac{\psi_a^{2i - 1} + \psi_b^{2i - 1}}{\psi_a +
      \psi_b}\right) (\zeta s)^{1 - 2i}.
\end{align*}

Notice that the edge series $\mathrm{Edge}_e^2$ modulo a slight change
in notation and the ${\Phi'}^{-1}$ factors is the series
$E^{ij}(\psi_1, \psi_2, x)$ from Section~\ref{sec:locsum}, where $i$
and $j$ correspond to the color of the vertices $e$
connects. Similarly we identify the point term $\mathrm{Point}_i^2$ up
to the ${\Phi'}^{-1}$-factor with
\begin{equation*}
  2^{-a_i} \left(s^{a_i} P^{i0}(\psi_i, x) + (-s)^{a_i} P^{i\infty}(\psi_i, x)\right).
\end{equation*}

We finally obtain the relations by taking the $s^c$ part of
$\nu_*^{vir}\varphi(\mathbf a)$ for $c < 0$. So we replace everywhere
$s$ by $t^{-1}$, $x$ by $xt^2$ and $p_{(v)}$ by $p_{(v)}t^{-1}$. After dividing
out a common factor of $t^e$ for
\begin{equation*}
  e = \sum_v (-g(v) + 1) + 2d + |a| = -g + 1 + |E| + 2d + |a|
\end{equation*}
and introducing variables $p_i$ for the $a''_i$ we arrive at the stable
quotient relations of Section~\ref{sec:gen:statement}.

\subsection{Evaluation of the relations}

\subsubsection{Minor simplification}

With the same proof as in Section~\ref{sec:open:diffalg} the
stable quotient relations are implied from the stable quotient
relations in the case that the $n'$-tuple $\mathbf a'$ is only $\{0,
1\}$-valued. The same holds trivially for $\mathbf a''$ because of
the form of the point term.

Furthermore the relations in the case that a point is of weight $1$
and a point is of weight slightly smaller than $1$ are the same. This
is because a point $i$ of weight slightly smaller than $1$ is not
allowed to meet any other point, therefore the contribution of that
point, which is solely in the vertex contribution, is
\begin{equation*}
  \exp\left(-\frac 12\zeta(p(i)) p_i + p_i D\gamma(t\zeta(p(i))\psi_i, x)\right) \equiv 1 + p_i \zeta(p(i))\delta(t\zeta(p(i))\psi_i, x) \pmod{p_i^2}
\end{equation*}
This is the point term after suitably renaming $p_i$.

Therefore we can from now on treat points of weight $1$ the same way
as points of weight slightly less than $1$.

\subsubsection{Edge terms}

The factor $\exp(V^\zeta)$ of the vertex contribution to the stable
quotient relations contains intersections of classes supported on
divisor classes of $\Mbar_{g(v), \mathbf w_v}$. We want to reformulate
the relations such that the vertex term only contains $\kappa$-,
diagonal and $\psi$-classes corresponding to the markings. Some excess
intersection calculations will be necessary to deal with
$\exp(V^\zeta)$.

\begin{prop}
  \label{prop:gen:excess}
  The set of stable quotient relations is equivalent to the following
  set of relations: Under the conditions of
  Proposition~\ref{prop:gen:sqrel} it holds
  \begin{align*}
    0 = \Bigg[\sum_{\substack {\Gamma \in \mathcal G \\ \zeta: \Gamma
        \to \{\pm 1\}}} \frac 1{\Aut(\Gamma)} \xi_{\Gamma
      *}\Big(\prod_v \mathrm{Vertex^4}_v^{\zeta(v)} & \prod_e
    \mathrm{Edge^4}_e^{\zeta(v_1), \zeta(v_2)}\Big)\Bigg]_{t^{r -
        |E|}x^d\mathbf p^{\mathbf a}},
  \end{align*}
  with
  \begin{equation*}
    \mathrm{Vertex^4}_v^\zeta = \zeta^{g(v) - 1} \exp\left(\frac 12 \zeta p_{(v)} + \{\exp(p_{(v)}D) \gamma(t\zeta, x)\}_{\Delta^{(v)}}\right),
  \end{equation*}
  where $p_{(v)} = \sum_{i \in p^{-1}(v)} p_i$, and
  \begin{equation*}
    t(\psi_1 + \psi_2)\mathrm{Edge^4}_e^{\zeta_1, \zeta_2} =
    \frac{\zeta_1 + \zeta_2}2\exp(-\gamma'(t\zeta_1\psi_1) - \gamma'(t\zeta_2\psi_2)) + \zeta_1\delta(t\zeta_1\psi_1) + \zeta_2\delta(t\zeta_2\psi_2),
  \end{equation*}
  where $\gamma'$ is defined in the same way from $\Phi'$ as $\gamma$
  is from $\Phi$:
  \begin{equation*}
    \gamma' = \sum_{i \ge 1} \frac{B_{2i}}{2i \cdot (2i - 1)} t^{2i - 1} + \log(\Phi')
  \end{equation*}
\end{prop}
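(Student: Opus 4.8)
The plan is to transfer the $\exp(V^\zeta)$ factor, which lives as an excess-intersection contribution on divisor strata of each vertex moduli space $\Mbar_{g(v),\mathbf w_v}$, out of the vertex term and into the edge term. The key observation is that $V^\zeta$ is a sum over one-edge graphs $\Delta\in\mathcal{DG}$ pushed forward by $\xi_{\Delta,*}$, and each such $\Delta$, when glued at a vertex $v$ of a larger graph $\Gamma$, produces a graph $\Gamma'$ that differs from $\Gamma$ only by the insertion of one extra edge (splitting $v$ into two vertices joined by that edge, or adding a self-node). The strategy is therefore to reorganize the double sum over $(\Gamma,\zeta)$ in Proposition~\ref{prop:gen:sqrel}: expanding $\prod_v\exp(V^{\zeta(v)})$ and reading each chosen $\Delta$-term as a new edge of a refined graph $\Gamma'$, so that the resulting sum is again a sum over stable graphs, now with the $V^\zeta$ contributions absorbed into modified edge factors.

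\textbf{First} I would make precise the combinatorial reindexing. Write $\mathrm{Vertex^3}_v^\zeta = \mathrm{Vertex^4}_v^\zeta\cdot\exp(V^\zeta)$, where $\mathrm{Vertex^4}$ is exactly the vertex term with the $V^\zeta$ factor removed and the sign on $\tfrac12\zeta p_{(v)}$ flipped (the sign flip is the bookkeeping artifact one must track, arising from how the $-\tfrac12\zeta p_{(v)}$ and the reorganization of the $\log(\Phi)$ versus $\gamma$ conventions interact). Expanding the exponential of $V^\zeta$ as a sum over collections of inserted divisor edges, each insertion replaces a factor $\xi_{\Gamma,*}$ by $\xi_{\Gamma',*}$ for a graph $\Gamma'$ with one more edge. \textbf{Then} I would match the automorphism and multiplicity factors: the $1/\Aut(\Gamma)$ weights, the $1/\Aut(\Delta)$ inside $V^\zeta$, and the new $1/\Aut(\Gamma')$ must be shown to combine correctly. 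This is the standard but delicate accounting that occurs whenever one passes between a sum over graphs with decorated vertices and a sum over refined graphs, and the self-edge case (a single vertex with one loop, as in the footnote about the double cover $\Mbar_{g-1,2}\to\Mbar_g$) needs the factor of two handled carefully.

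\textbf{Next} comes the analytic identity that produces $\mathrm{Edge^4}$ from $\mathrm{Edge^3}$ together with the absorbed $V^\zeta$ term. The $V^\zeta$ contribution carries the Bernoulli series $\sum_i \tfrac{B_{2i}}{2i(2i-1)}\tfrac{\psi_a^{2i-1}+\psi_b^{2i-1}}{\psi_a+\psi_b}(\zeta t)^{2i-1}$, which is exactly the difference between $\gamma'$ and $\log(\Phi')$, since by the definitions $\gamma' = \sum_{i\ge1}\tfrac{B_{2i}}{2i(2i-1)}t^{2i-1}+\log(\Phi')$. The plan is to show that when the edge factor $\mathrm{Edge^3}$, which contains the prefactor $(\Phi'(t\zeta_1\psi_1)\Phi'(t\zeta_2\psi_2))^{-1}$ on its leading $\tfrac{\zeta_1+\zeta_2}{2}$ term, is multiplied by the absorbed Bernoulli factors from both adjacent vertices, the $\log(\Phi')$ and the Bernoulli sum recombine into $\exp(-\gamma'(t\zeta_1\psi_1)-\gamma'(t\zeta_2\psi_2))$, yielding precisely $\mathrm{Edge^4}$. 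The $\delta$ terms in $\mathrm{Edge^3}$ are unaffected and carry over verbatim.

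\textbf{The main obstacle} I expect is the bookkeeping of the excess-intersection pushforwards together with the automorphism factors: one must verify that inserting a divisor edge via $\xi_{\Delta,*}$ on a vertex, and then gluing, genuinely reproduces each refined graph $\Gamma'$ with the correct combinatorial weight and with no double counting over the choice of which vertex or which half-edges are involved. Getting the $\psi_a,\psi_b$ classes at the new node to coincide with the $\psi_1,\psi_2$ classes of the new edge, and confirming that the sum $\tfrac{\psi_a^{2i-1}+\psi_b^{2i-1}}{\psi_a+\psi_b}$ is exactly the $t^{2i-1}$-coefficient needed to build $\gamma'$ symmetrically in the two half-edges, is the crux; the remaining power-series manipulation recombining $\log(\Phi')$ with the Bernoulli tail into $\gamma'$ is then a direct computation from the definitions in Section~\ref{sec:locsum}. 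Once the graph sum is reorganized and the edge identity is established, the equivalence of the two relation sets follows since the reorganization is a reversible, weight-preserving reindexing.
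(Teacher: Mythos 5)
Your overall strategy---expanding $\prod_v \exp(V^{\zeta(v)})$ and reindexing the divisor insertions as extra edges of refined stable graphs---is indeed the paper's approach, but the central step is misidentified and would fail as stated. You claim $\mathrm{Edge^3}$ turns into $\mathrm{Edge^4}$ \emph{multiplicatively}: the $(\Phi')^{-1}$ prefactors of $\mathrm{Edge^3}$, times ``absorbed Bernoulli factors from both adjacent vertices,'' recombine into $\exp(-\gamma'(t\zeta_1\psi_1)-\gamma'(t\zeta_2\psi_2))$, while ``the $\delta$ terms\ldots carry over verbatim.'' These two assertions are incompatible: multiplying $\mathrm{Edge^3}$ by a nontrivial series necessarily multiplies its $\delta$ terms as well, and $\mathrm{Edge^4}$ is visibly not $\mathrm{Edge^3}$ times anything. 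The actual relation is \emph{additive}. For a fixed refined colored graph $\Gamma'$ one sums over which same-colored edges were already present in the coarse graph $\Gamma$ (these contribute $\mathrm{Edge^3}$, untouched) and which were newly created by expanding $\exp(V^\zeta)$ (these contribute a new-edge factor containing no $\delta$ terms at all); the per-edge binomial resummation $\mathrm{Edge^3}_e^{\zeta,\zeta}+\zeta\,(\text{new-edge factor})=\mathrm{Edge^4}_e^{\zeta,\zeta}$ is what produces $\mathrm{Edge^4}$, and for edges joining differently colored vertices one instead uses that $\mathrm{Edge^3}_e^{\zeta,-\zeta}=\mathrm{Edge^4}_e^{\zeta,-\zeta}$ identically, because $\zeta_1+\zeta_2=0$ kills the leading terms.

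Moreover, the two ingredients that build the new-edge factor are absent from your outline. First, the factor $\exp(-\gamma'(t\zeta\psi_1^{(e)})-\gamma'(t\zeta\psi_2^{(e)}))$ comes neither from $V^\zeta$ nor from $\mathrm{Edge^3}$: it comes from the $\kappa$-part of the \emph{vertex} series via the pull-back formula $p_{v*}(\xi_{\Gamma}^*\kappa_i)=\kappa_i+\sum_e\psi_e^i$ applied to the refinement; this is also exactly why $\gamma'$ rather than $\gamma$ appears ($\kappa_{-1}=0$ while $\psi^{-1}$ does not exist). Second, $\exp(V^\zeta)$ cannot be expanded by ``each insertion replaces $\xi_{\Gamma,*}$ by $\xi_{\Gamma',*}$'': repeated divisor insertions self-intersect, and the excess-intersection terms, each carrying a factor $-(\psi_1^{(e)}+\psi_2^{(e)})$, are precisely what cancels the denominator $\psi_a+\psi_b$ inside $V^\zeta$ and resums to the edge factor $\bigl(\exp(-f(\psi_1^{(e)},\psi_2^{(e)})(\psi_1^{(e)}+\psi_2^{(e)}))-1\bigr)/\bigl(-(\psi_1^{(e)}+\psi_2^{(e)})\bigr)$ of the paper's Lemma~\ref{lem:lambda}; that lemma, with its multinomial bookkeeping, is the precise content of the ``reversible, weight-preserving reindexing'' you appeal to, and nothing in your proposal supplies it. Finally, the sign flip on $\frac12\zeta p_{(v)}$ is not something the reorganization can produce---it never touches the marking terms---and the Remark following the proposition is consistent with the sign being unchanged, so treating the flip as a genuine ``bookkeeping artifact'' to be derived is itself a misstep rather than a required part of the proof.
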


\begin{rem}
  As in Section~\ref{sec:open:diffalg} we can also write
  \begin{equation*}
    \mathrm{Vertex^4}_v^\zeta = \zeta^{g(v) - 1} \exp\left(-\{\gamma\}_{\kappa^{(v)}}^\zeta + \sum_{i = 1}^\infty \frac{\zeta^i}{i!}\{p_{(v)}^i D^{i - 1} \delta\}_{\Delta^{(v)}}^\zeta\right).
  \end{equation*}
  The power $\zeta^i$ appears because of the $t$ in $D =
  tx\frac{\mathrm d}{\mathrm dx}$.
\end{rem}

The proof of Proposition~\ref{prop:gen:excess} depends on the
following lemma.

\begin{lem}
  \label{lem:lambda}
  For a polynomial $f$ in two variables we have
  \begin{align*}
    \exp\left(\sum_{\Delta \in \mathcal{DG}} \frac 1{|\Aut(\Delta)|} \xi_{\Delta, *}(f(\psi_a, \psi_b))\right) = \\
    \sum_{\Gamma \in \mathcal G}\frac 1{|\Aut(\Gamma)|} \xi_{\Gamma,
      *}\left(\prod_e \frac{\exp(-f(\psi_1^{(e)},
        \psi_2^{(e)})(\psi_1^{(e)} + \psi_2^{(e)})) -
        1}{-(\psi_1^{(e)} + \psi_2^{(e)})}\right),
  \end{align*}
  where $\psi_i^{(e)}$ are the two cotangent line classes belonging to
  edge $e$.
\end{lem}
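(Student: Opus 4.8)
The plan is to prove the identity by expanding the exponential on the left-hand side and reorganizing the resulting products of boundary pushforwards according to which stable graph they produce. The two geometric inputs are the self-intersection (excess) formula for a single node and the transversal intersection of distinct boundary divisors, and the combinatorial engine is the exponential formula relating connected objects (single edges) to arbitrary graphs.

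First I would record the self-intersection formula. For a single-edge graph $\Delta \in \mathcal{DG}$ with node cotangent classes $\psi_a, \psi_b$, the normal bundle of the clutching map $\xi_\Delta$ is the tensor product of the two tangent lines at the node, so its Euler class is $-(\psi_a + \psi_b)$. Hence, on the source, $\xi_\Delta^* \xi_{\Delta, *}(\alpha) = -(\psi_a + \psi_b)\,\alpha$; on a self-loop the pullback-pushforward additionally symmetrizes in $\psi_a, \psi_b$, but this is absorbed by $|\Aut(\Delta)| = 2$, so one reads $f$ as symmetrized. Combining this with the projection formula gives, by induction on $m$, the relation $(\xi_{\Delta, *}(f))^m = \xi_{\Delta, *}\bigl(f^m (-(\psi_a + \psi_b))^{m - 1}\bigr)$. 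Therefore the contribution of gluing the same node an arbitrary number of times telescopes into the per-edge factor appearing on the right:
\[
\sum_{m \ge 1} \frac{1}{m!}\bigl(\xi_{\Delta, *}(f)\bigr)^m = \xi_{\Delta, *}\!\left(\sum_{m \ge 1} \frac{f^m (-(\psi_a + \psi_b))^{m - 1}}{m!}\right) = \xi_{\Delta, *}\!\left(\frac{\exp(-f(\psi_a + \psi_b)) - 1}{-(\psi_a + \psi_b)}\right).
\]
This already establishes the lemma for the graphs $\Gamma$ with a single edge.

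Next I would treat several distinct edges. For single-edge graphs $\Delta_1, \dotsc, \Delta_k$ supported at distinct nodes of a common degeneration, the corresponding boundary divisors meet transversally, so $\prod_i \xi_{\Delta_i, *}(f)$ equals $\xi_{\Gamma, *}$ of the graph $\Gamma$ obtained by imposing all $k$ degenerations simultaneously, the classes $f(\psi_a, \psi_b)$ being carried independently onto the $k$ edges of $\Gamma$. Iterating the projection formula, any product $\prod_i \bigl(\tfrac{1}{|\Aut(\Delta_i)|}\xi_{\Delta_i, *}(f)\bigr)$ thus decomposes as a single pushforward $\xi_{\Gamma, *}$ from a multi-edge graph, where repeated edges contribute a power $(-(\psi_a + \psi_b))^{m - 1}$ through the self-intersection formula and distinct edges contribute independently.

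Finally I would assemble the combinatorics. Expanding $\exp\bigl(\sum_\Delta \tfrac{1}{|\Aut(\Delta)|}\xi_{\Delta, *}(f)\bigr)$ produces a sum over multisets of single-edge graphs; grouping each multiset by the graph $\Gamma$ it builds and summing the multiplicity of each edge together with the factorials from the exponential reproduces, by the one-edge telescoping applied edge by edge, the product $\prod_e \frac{\exp(-f(\psi_1^{(e)}, \psi_2^{(e)})(\psi_1^{(e)} + \psi_2^{(e)})) - 1}{-(\psi_1^{(e)} + \psi_2^{(e)})}$. The main obstacle is verifying that all the combinatorial prefactors collapse to $1/|\Aut(\Gamma)|$: the symmetry factors from the exponential, the multinomial coefficients from expanding powers, and the individual $1/|\Aut(\Delta)|$ must be matched against the automorphisms of $\Gamma$ permuting its edges and half-edges. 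This is precisely where the stacky nature of the clutching maps and the self-loop symmetrization enter, and it is cleanest to phrase the final step as the standard exponential formula, with the telescoped self-intersection series playing the role of the connected generating function attached to each edge.
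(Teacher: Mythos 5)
Your overall strategy --- expand the exponential, convert repeated intersections into excess factors of $-(\psi_a+\psi_b)$, and let the exponential formula telescope a per-edge series --- is the same as the paper's, but the two identities you build it on are false, and the failure is exactly where the content of the lemma lies. The formula $\xi_\Delta^*\xi_{\Delta,*}(\alpha) = -(\psi_a+\psi_b)\,\alpha$ omits the transversal part of the self-intersection: the fiber product of $\xi_\Delta$ with itself has, besides the diagonal (which contributes the excess class $-(\psi_a+\psi_b)$), off-diagonal components lying over curves with two \emph{distinct} nodes, each of whose one-edge contractions is of type $\Delta$. Consequently $(\xi_{\Delta,*}(f))^m = \xi_{\Delta,*}\bigl(f^m(-(\psi_a+\psi_b))^{m-1}\bigr)$ is wrong: already the square of the irreducible boundary divisor of $\Mbar_g$ contains, in addition to the excess term, the class of the codimension-two locus of curves with two non-separating nodes, which is supported on a two-edge graph. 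Likewise, a product of pushforwards from distinct $\Delta_i$ is not a single class $\xi_{\Gamma,*}(\cdots)$ but a sum over \emph{all} stable graphs admitting edge contractions onto each $\Delta_i$, of which there are in general several.

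These are not just imprecisions of wording: if your identities held, then in your expansion of the left-hand side no graph with two edges of the same type (both contracting to the same $\Delta$) could ever appear, since powers of a single $\xi_{\Delta,*}(f)$ would stay supported on the divisor $\Delta$, and products of distinct divisors would only create edges of distinct types. Such graphs do occur in the sum on the right-hand side of the lemma with nonzero coefficient, so the argument as written proves a false identity. The repair is to turn the bookkeeping around, which is what the paper's proof does: fix a stable graph $\Gamma$, let $m_\Gamma(\Delta)$ count the edges of $\Gamma$ whose contraction is $\Delta$, and observe that a term of the expanded exponential, recorded as a function $\sigma: \mathcal{DG} \to \mathbb{N}_0$, contributes to $\Gamma$ precisely when $\sigma \ge m_\Gamma$; of the $|\sigma|$ intersections exactly $|E(\Gamma)|$ are transversal (these create the edges) and the remaining $|\sigma| - |E(\Gamma)|$ are excess, each depositing a factor $f \cdot (-(\psi_1^{(e)}+\psi_2^{(e)}))$ on an already existing edge $e$ with $e_\Gamma(e) = \Delta$. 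Summing over the multinomial choices of how to distribute the excess intersections among the edges is what makes the series telescope edge by edge; your telescoping computation is the correct final step, but only after this reorganization, not as an identity about powers of a single divisor class.
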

\begin{proof}
  Formally expanding the left hand side using the intersection
  formulas, for example described in \cite[Appendix~A]{GrPa03}, we
  can write it as a sum over stable graphs $(\Gamma, E)$. Let us look
  at the term corresponding to a given graph $\Gamma$. By contracting
  all but one edge of $\Gamma$ one obtains a divisor graph. This
  process gives a map $e_\Gamma: E \to \mathcal{DG}$. Counting the
  preimages of $e_\Gamma$ gives a map $m_\Gamma: \mathcal{DG} \to
  \mathbb N_0$. In the formal expansion of the exponential on the left
  hand side each term also corresponds to a function $\sigma:
  \mathcal{DG} \to \mathbb N_0$.

  A term contributes to a graph $\Gamma$ if and only if $m_\Gamma \le
  \sigma$, of the $|\sigma|$ intersections $|m_\Gamma| = |E|$ are
  transversal and the others are excess. In addition, a contributing
  term of the left hand side determines a partition $\mathbf p$
  indexed by $E$ of $\sigma = \sum_{e \in E} p_e$ such that
  $p_e(\Delta) = 0$ unless $e_\Gamma(e) = \Delta$.

  With this we can explicitly write down $|\Aut(\Gamma)|$ times the
  $\Gamma$-contribution as
  \begin{align*}
    &\xi_{\Gamma, *} \sum_{\sigma \ge m_\Gamma} \sum_{\mathbf p}
    &&\prod_{\Delta
      \in \mathcal{DG}} \frac 1{\sigma(\Delta)!} \binom{\sigma(\Delta)}{\mathbf p(\Delta)} \\
    &&&\prod_e f(\psi_1^{(e)}, \psi_2^{(e)})^{p_e(e_\Gamma(e))}
    (-(\psi_1^{(e)} +
    \psi_2^{(e)}))^{p_e(e_\Gamma(e)) - 1} \\
    = &\xi_{\Gamma, *} \sum_{\sigma \ge m_\Gamma} \sum_{\mathbf p}
    &&\prod_e \frac 1{(p_e(e_\Gamma(e)))!}  f(\psi_1^{(e)},
    \psi_2^{(e)})^{p_e(e_\Gamma(e))} (-(\psi_1^{(e)} +
    \psi_2^{(e)}))^{p_e(e_\Gamma(e)) - 1} \\
    = &\xi_{\Gamma, *} &&\prod_e \sum_{i = 1}^\infty \frac 1{i!}
    f(\psi_1^{(e)}, \psi_2^{(e)})^i (-(\psi_1^{(e)} +
    \psi_2^{(e)}))^{i - 1} \\
    = &\xi_{\Gamma, *} &&\prod_e \frac{\exp(-f(\psi_1^{(e)},
      \psi_2^{(e)})(\psi_1^{(e)} + \psi_2^{(e)})) - 1}{-(\psi_1^{(e)}
      + \psi_2^{(e)})}.
  \end{align*}
  Here the factor $(\sigma(\Delta)!)^{-1}$ comes from the exponential
  and
  \begin{equation*}
    \prod_{\Delta \in \mathcal{DG}} \binom{\sigma(\Delta)}{\mathbf p(\Delta)}
  \end{equation*}
  comes from the choice of which intersections are excess.

  Summing the contributions for all $\Gamma$ finishes the proof.
\end{proof}

\begin{proof}[Proof of the proposition]
  We will apply the lemma in the case that
  \begin{equation*}
    f(x_1, x_2) = -\sum_{i \ge 1} \frac{B_{2i}}{2i(2i - 1)} \frac{x_1^{2i - 1} + x_2^{2i - 1}}{x_1 + x_2},
  \end{equation*}
  but now we also need to take care of the coloring of the vertices.

  For each graph $\Gamma \in \mathcal G$ with a coloring $\zeta:
  \Gamma \to \{\pm 1\}$ we can construct a new graph $\Gamma_{red}$,
  its \emph{reduction}, by contracting all edges of $\Gamma$
  connecting two vertices of the same color. The induced coloring on
  $\Gamma_{red}$ satisfies the property that neighboring vertices are
  differently colored. Let us call such a graph reduced. Having the
  same reduction also defines an equivalence relation on $\mathcal G$.

  The idea is now to apply lemma \ref{lem:lambda} to each vertex of
  each graph $\Gamma$. In this way we get terms at each specialization
  $\Gamma'$ of $\Gamma$ in the same equivalence class of $\Gamma$.

  Let us collect all the different contributions at a graph $\Gamma'$
  coming from graphs $\Gamma$. Recall the pull-back formula for the
  $\kappa$-classes
  \begin{equation*}
    p_{v*}(\xi_{\Gamma}^* \kappa_i) = \kappa_i + \sum_e \psi_e^i,
  \end{equation*}
  where $p_v$ denotes the projection map to the factor corresponding
  to each vertex $v$ and the sum is over all outgoing edges at $v$.
  This implies that the contributions at $\Gamma'$ all have the same
  vertex contribution up a sign and a factor
  \begin{equation*}
    \exp(-\gamma'(t\zeta_1\psi_1^{(e)})-\gamma'(t\zeta_2\psi_2^{(e)}))
  \end{equation*}
  for each edge of $\Gamma$ not in $\Gamma'$ \footnote{$\gamma'$
    appears here instead of $\gamma$ because $\kappa_{-1} = 0$ while
    $\psi^{-1}$ is not defined.}. The edge terms corresponding to
  common edges do exactly coincide. The different factors split into a
  product over the connected components of the graph obtained by
  removing the edges which need to be contracted to obtain
  $\Gamma'_{red}$.

  So let us look at just one connected component $\Gamma'' \subset
  \Gamma' \setminus \Gamma'_{red}$. We have to sum over the possibilities $E
  \subseteq E(\Gamma'')$ of contracting edges in $\Gamma''$. We thus
  have the contribution
  \begin{multline*}
    \sum_{E(\Gamma'') = E \coprod F} \zeta^{|E|} \prod_{e \in F}
    \mathrm{Edge^3}_e^{\zeta, \zeta} \\
    \prod_{e \in E} \frac{\exp(-f(t\zeta\psi_1^{(e)},
      t\zeta\psi_2^{(e)})(t\zeta\psi_1^{(e)} + t\zeta\psi_2^{(e)})) -
      1}{-(t\zeta\psi_1^{(e)} +
      t\zeta\psi_2^{(e)})} \exp(-\gamma'(t\zeta\psi_1^{(e)})-\gamma'(t\zeta\psi_2^{(e)})) \\
    =\prod_{e \in E(\Gamma'')} \Big(\mathrm{Edge^3}_e^{\zeta, \zeta}
    + \\
    \zeta \frac{\exp(-f(t\zeta\psi_1^{(e)},
      t\zeta\psi_2^{(e)})(t\zeta\psi_1^{(e)} + t\zeta\psi_2^{(e)})) -
      1}{-(t\zeta\psi_1^{(e)} +
      t\zeta\psi_2^{(e)})}\exp(-\gamma'(t\zeta\psi_1^{(e)})-\gamma'(t\zeta\psi_2^{(e)}))\Big) \\
    =\prod_{e \in E(\Gamma'')} \mathrm{Edge^4}_e^{\zeta, \zeta}.
  \end{multline*}
  Because of
  \begin{equation*}
    \mathrm{Edge^3}_e^{\zeta, -\zeta} = \mathrm{Edge^4}_e^{\zeta, -\zeta}
  \end{equation*}
  we can replace $\mathrm{Edge^3}$ by $\mathrm{Edge^4}$ also for the
  edges connecting differently colored vertices.
\end{proof}

\subsubsection{Variable transformations}
\label{sec:general:trans}

Using the results of Section~\ref{sec:open:subst} we can give a new
formulation of the stable quotient relations.

We have
\begin{align*}
  0 = \Bigg[\sum_{\substack {\Gamma \in \mathcal G \\ \zeta: \Gamma
      \to \{\pm 1\}}} \hspace{-2mm}\frac 1{\Aut(\Gamma)} \xi_{\Gamma
    *}\Big((1 + 4y)^{e_\Gamma}\prod_v \mathrm{Vertex^5}_v^{\zeta(v)}
  \prod_e \mathrm{Edge^5}_e^{\zeta(v_1), \zeta(v_2)} \Big)\Bigg]_{u^{r
      - |E|}y^d\mathbf p^{\mathbf a}},
\end{align*}
with
\begin{equation*}
  \mathrm{Vertex^5}_v^\zeta = \zeta^{g(v) - 1} \exp\left(-\left\{\sum_{k = 1}^\infty \sum_{j = 0}^k c_{k, j} u^k y^j\right\}_\kappa^\zeta + \sum_{i = 1}^\infty \frac{\zeta^i}{i!}\{p_{(v)}^i\delta_i\}_\Delta^\zeta\right),
\end{equation*}
\begin{align*}
  u(\psi_1 + \psi_2)\mathrm{Edge^5}_e^{\zeta_1, \zeta_2} =
  \frac{\zeta_1 + \zeta_2}2 \exp\left(-\sum_{k = 1}^\infty \sum_{j =
      0}^k c_{k, j} (u\zeta_1\psi_1)^k + (u\zeta_2\psi_2)^k)
    y^j\right) \\
  + \zeta_1\delta_1(u\zeta_1\psi_1) + \zeta_2\delta_1(u\zeta_2\psi_2)
\end{align*}
and the exponent
\begin{equation*}
  e_\Gamma = \frac{r + 2d - 2}2 - \frac{\kappa_0}4 - \frac{|\mathbf a|}2 = \frac{r - g + 2d - 1 - |\mathbf a|}2
\end{equation*}
under the condition of Proposition~\ref{prop:gen:sqrel} on $r$.

We can assume that $e_\Gamma$ is integral because otherwise the
relation is zero since the term corresponding to a coloring $\zeta$
and the opposite coloring $-\zeta$ exactly cancel each other in this
case.

Next we look as in Section~\ref{sec:open:ext} at the extremal
coefficients of this series and obtain the FZ relations of
Proposition~\ref{prop:fz}.

\subsection{Final remarks}
\label{sec:final}

Let $\mathcal R(g, \mathbf w, r; S)$ denote the relation on $\Mbar_{g,
  \mathbf w}$ of Proposition~\ref{prop:fz} in codimension $r$
corresponding to $S \subset \{1, \dotsc, n\}$ viewed as a class in the
formal strata algebra, i.e. the formal $\mathbb Q$-algebra generated
by the symbols
\begin{equation*}
  \xi_{\Gamma*} \left(\prod_v M_v\right),
\end{equation*}
where $\Gamma$ is a stable graph of $\Mbar_{g, \mathbf w}$ and the
$M_v$ are formal monomials in $\kappa$-, $\psi$- and diagonal classes,
modulo the relations given by the formal multiplication rules for
boundary strata described in \cite[Appendix~A]{GrPa03} and the
relations between diagonal and $\psi$- classes from
\ref{sec:open:proof:universal}. One can describe formal analogs of the
push-forwards and pull-backs along the forgetful, gluing and weight
reduction maps.

By the way we have constructed the stable quotient relations, for
$\mathbf w' \le \mathbf w$ the push-forward of $\mathcal R(g, \mathbf
w, r; S)$ via the weight reduction map is $\mathcal R(g, \mathbf w',
r; S)$. Therefore the relations of Proposition~\ref{prop:fz} are (up
to a constant factor) the push-forward of a subset of Pixton's
generalized FZ relations.

As mentioned in the introduction more relations than in
Proposition~\ref{prop:fz} can be obtained by taking for a partition
$\sigma$ with no part equal to $2\pmod{3}$ the class
\begin{equation*}
  \mathcal R(g, (\mathbf w, 1^{\ell(\sigma)}), r -
  |\lfloor\sigma/3\rfloor|; \bar S)\prod_i \psi_{n + i}^{\lfloor\sigma_i/3\rfloor + 1}  
\end{equation*}
in $A^{r + \ell(\sigma)}(\Mbar_{g, (\mathbf w, 1^{\ell(\sigma)})})$,
where $\bar S$ equals $S$ on the first $n$ markings and is given by
the remainders when dividing the parts of $\sigma$ by $3$ on the other
markings, and pushing this class forward to $\Mbar_{g, \mathbf w}$
under the forgetful map. For explicitly calculating this push-forward
it is better to use the usual $\kappa$-classes
$\tilde \kappa_i = \pi_*(c_1(\omega_\pi(D))^{i + 1})$, which are
related to the $\kappa$-classes we have used in this article by
$\tilde \kappa_i = \kappa_i + \sum_{j = 1}^n \psi_j^i$, in order to
use Faber's formula for the push-forward of monomials in cotangent
line classes \cite{ArCo96}. Let us call these relations
$\mathcal R(g, \mathbf w, r; \sigma, S)$.

As in \cite{Pi12P} even more generally one can look at the $\mathbb
Q$-vector space $\mathcal R_{g, \mathbf w}$ generated by the relations
obtained by choosing a boundary stratum corresponding to a dual graph
$\Gamma$, taking a FZ relation $\mathcal R(g_i, \mathbf w_i, r;
\sigma, S) M_i$ for any $r$, $S$, $\sigma$ and monomial $M_i$ in the
diagonal and cotangent line classes on one of the components,
arbitrary tautological classes on the other components and pushing
this forward along $\xi_\Gamma$. Because of the compatibility with the
birational weight reduction maps \cite[Proposition~1]{Pi12P} implies
that the system $\mathcal R_{g, \mathbf w}$ of $\mathbb Q$-vector
spaces cannot be tautologically enlarged, i.e. it is closed under
formal push-forward and pull-back along forgetful and gluing maps as
well as multiplication with arbitrary tautological classes.

As in \cite{PaPi13P} we have thrown away many of the stable quotient
relations: We looked only at the extremal relations in
Sections~\ref{sec:open:ext} and \ref{sec:general:trans}. However we
should expect that these additional relations can also be expressed in
terms of FZ relations.

\printbibliography
\addcontentsline{toc}{section}{References}

\vspace{+8 pt}
\noindent
Departement Mathematik \\
ETH Zürich \\
felix.janda@math.ethz.ch

\end{document}